\newtheorem{theorem}{Theorem}[section]
\newtheorem{lemma}[theorem]{Lemma}
\newtheorem{prop}[theorem]{Proposition}
\theoremstyle{definition}
\newtheorem{defin}[theorem]{Definition}
\newtheorem{fact}[theorem]{Fact}
\newtheorem{que}[theorem]{Question}
\theoremstyle{remark}
\newtheorem*{rem}{Remark}
\newcommand{\N}{\mathbb{N}}
\renewcommand{\S}{\mathrm{S}}
\newcommand{\M}{\mathrm{M}}
\newcommand{\flim}[1]{\mathrm{Flim}(#1)}
\newcommand{\fr}{Fra\"iss\'e }
\renewcommand{\phi}{\varphi}
\newcommand{\emb}[1]{\mathrm{Emb}(#1)}
\newcommand{\aut}[1]{\mathrm{Aut}(#1)}
\newcommand{\im}[1]{\mathrm{Im}(#1)}
\begin{document}
	\title{Topological dynamics of Polish group extensions}
	\author{Colin Jahel and Andy Zucker}
	\date{June 2020}
	\maketitle
	
	\begin{abstract}
		We consider a short exact sequence $1\to H\to G\to K\to 1$ of Polish groups and consider what can be deduced about the dynamics of $G$ given information about the dynamics of $H$ and $K$. We prove that if the respective universal minimal flows $\M(H)$ and $\M(K)$ are metrizable, then so is $\M(G)$. Furthermore, we show that if $\M(H)$ and $\M(K)$ are metrizable and both $H$ and $K$ are uniquely ergodic, then so is $G$. We then discuss several examples of these phenomena.
		\let\thefootnote\relax\footnote{2010 Mathematics Subject Classification. Primary: 37B05; Secondary: 54H20, 03E15.}
		\let\thefootnote\relax\footnote{The second author was supported by NSF Grant no.\ DMS 1803489.}
	\end{abstract}
	
\section{Introduction}

Let $G$ be a Polish group, and suppose $H\subseteq G$ is a closed, normal subgroup. Setting $K = G/H$, we have that $K$ is also a Polish group, and the quotient map $\pi\colon G\to K$ is a continuous, open homomorphism. In this setting, we say that $G$ is an \emph{extension of $K$ by $H$}. This is the same as saying that 
\begin{equation*}
1\to H\to G\xrightarrow{\pi} K\to 1
\end{equation*}
is a short exact sequence. Examples of group extensions include group products, semidirect products but also more complicated ones as we illustrate in section $5$.

We are interested in dynamical properties of $H$ and $K$ that pass to $G$. In order to describe said properties, we need to introduce a bit of vocabulary. A \emph{$G$-flow} is a continuous (right) action $X\times G\to X$, where $X$ is a compact Hausdorff space. If $X$ and $Y$ are $G$-flows, a \emph{$G$-map} is a continuous map $\phi\colon X\to Y$ which respects the $G$-actions. A $G$-flow is \emph{minimal} if every orbit is dense, or equivalently if it admits no proper subflow.

A well-known result states that every topological group $G$ admits a \emph{universal minimal flow}, a minimal flow which admits a $G$-map onto any other minimal flow. The universal minimal flow is unique up to isomorphism, and is denoted $\M(G)$. This flow can be used to describe several dynamical properties of $G$. For instance, $G$ is \emph{amenable} when $\M(G)$ admits an invariant measure, and $G$ is \emph{uniquely ergodic} if $\M(G)$ admits a unique invariant measure (see \cite{AKL}). The topological group $G$ is called \emph{extremely amenable} if its universal minimal flow is just a point, and for several Polish groups, $\M(G)$ is non-trivial, but still metrizable (see \cite{KPT} for several examples of this). If $\M(G)$ is metrizable, then every minimal $G$-flow is metrizable, and  Ben Yaacov, Melleray and Tsankov prove in \cite{BYMT} that $\M(G)$ has a comeager orbit. We note that all the instances of uniquely ergodic Polish groups that we know have metrizable universal minimal flow.

Our aim in this paper is to describe $\M(G)$ using information about $\M(H)$ and $\M(K)$. In particular, knowing that $\M(H)$ and $\M(K)$ have nice properties, we would like to show that $\M(G)$ also shares these properties. The first theorem shows that metrizability of the universal minimal flow is preserved under group extension and also elaborates on the interaction between $\M(G)$ and $\M(K)$. Notice that $\M(K)$ is a minimal $G$-flow under the action $x\cdot g := x\cdot \pi(g)$, so there is a $G$-map from $\M(G)$ to $\M(K)$. We also denote this $G$-map by $\pi$ for reasons to be explained in Section~\ref{Sec:Background}. 
\vspace{2 mm}

\begin{theorem}
	\label{Thm:MGMetrizable}
	Let $1\to H\to G\xrightarrow{\pi} K\to 1$ be a short exact sequence of Polish groups. If $\M(H)$ and $\M(K)$ are metrizable, then so is $\M(G)$. Furthermore, letting $\pi\colon \M(G)\to \M(K)$ be the canonical map, we have that $\pi^{-1}(\{y\})$ is a minimal $H$-flow for every $y\in \M(K)$.
\end{theorem}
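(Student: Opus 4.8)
\emph{Setup and the fiber structure.} The plan is to study $\M(G)$ through the canonical $G$-map $\pi\colon\M(G)\to\M(K)$, exploiting that $H$ acts trivially on the base. Since the $G$-action on $\M(K)$ factors through $\pi$ and $\pi(H)=\{1\}$, every $h\in H$ fixes every point of $\M(K)$, so $\pi$ is $H$-equivariant for the trivial $H$-action downstairs. Hence each fiber $X_y:=\pi^{-1}(\{y\})$ is a closed $H$-invariant subspace, i.e.\ an $H$-flow, and $G$ permutes the fibers via $X_y\cdot g=X_{y\cdot\pi(g)}$. Because $H$ is normal, $G$ also permutes the minimal $H$-subflows of $\M(G)$: if $A$ is $H$-minimal, then so is each $A\cdot g$, since $(A\cdot g)\cdot h=A\cdot(ghg^{-1})\cdot g=A\cdot g$. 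This reduces the second assertion to showing that each fiber is a single $H$-minimal flow, and it reduces metrizability to understanding the fibers together with the metrizable base.

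\emph{Minimality of the fibers, the main obstacle.} Fix $y$ and $x,x'\in X_y$. Minimality of $\M(G)$ gives a net $g_i$ with $x\cdot g_i\to x'$, whence $y\cdot\pi(g_i)\to y$ in $\M(K)$. Since $\M(K)$ is metrizable it has a comeager orbit by \cite{BYMT}, so I may assume $y$ lies in it; then the orbit map $k\mapsto y\cdot k$ is open by Effros' theorem. Openness lets me correct $\pi(g_i)$ by a null net: I would produce $g_i'\to 1_G$ with $y\cdot\pi(g_i')=y\cdot\pi(g_i)$, so that $h_i:=g_i(g_i')^{-1}$ satisfies $x\cdot h_i\to x'$ and $\pi(h_i)\in\mathrm{Stab}_K(y)$. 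This shows only that $X_y$ is minimal for the larger group $L_y:=\pi^{-1}(\mathrm{Stab}_K(y))$. The hard part is upgrading $L_y$-minimality to $H$-minimality: a priori $X_y$ could split into several $H$-minimal pieces that $\mathrm{Stab}_K(y)$ permutes. I expect this to be the crux, and I would attack it by showing that the assignment sending a point of $X_y$ to the $H$-minimal subflow containing it is too rigid to be nonconstant, so that $\mathrm{Stab}_K(y)$ must preserve a single minimal subflow $A$ and thus $X_y=A$.

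\emph{Metrizability.} Once each fiber $X_y$ is a minimal $H$-flow, it is a continuous $H$-equivariant image of $\M(H)$ by universality, and since $\M(H)$ is compact metrizable so is every fiber. It remains to assemble the metrizable base and the (uniformly) metrizable fibers into a proof that the compact Hausdorff space $\M(G)$ is second countable. I would do this by constructing a continuous injection of $\M(G)$ into a compact metrizable flow: combining $\pi$ with a countable separating family of continuous functions obtained from the fiber data over the comeager orbit, where all fibers are identified by the $G$-action, one obtains a continuous $G$-map $\Phi\colon\M(G)\to\M(K)\times Z$ with $Z$ metrizable and $\Phi$ injective. Since a continuous injection from a compact Hausdorff space into a metrizable space is a topological embedding, this yields metrizability of $\M(G)$. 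The technical heart of this last step is controlling how the fibers vary over $\M(K)$ so that finitely much data per fiber globalizes to a countable separating family; metrizable base together with metrizable fibers is by itself insufficient, as the double-arrow space shows, so the continuity of the $G$-action and the comeager orbit in $\M(K)$ must be used essentially here.
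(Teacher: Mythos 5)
Your proposal has two genuine gaps, and they sit exactly at the two steps that carry the content of the theorem; in both places you describe a plan rather than give an argument. The more serious gap is fiber minimality. Your Effros/comeager-orbit correction can plausibly be made to show that $X_y$ is minimal under the larger group $L_y=\pi^{-1}(\mathrm{Stab}_K(y))$, but only for $y$ in the comeager orbit (the theorem asserts minimality of \emph{every} fiber, and fibers over points outside that single orbit are not $G$-translates of fibers inside it, so ``I may assume $y$ lies in it'' is not justified). More importantly, the upgrade from $L_y$-minimality to $H$-minimality --- which you yourself call the crux and propose to handle by an unspecified rigidity argument --- is simply absent. It is telling that your sketch of this step nowhere uses metrizability of $\M(H)$: that hypothesis is precisely what makes the step true. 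The paper's proof goes through almost periodic points instead: metrizability of $\M(H)$ gives the representation $\M(H)\cong\widehat{H^*\backslash H}$ with a canonical metric, from which one extracts fragments of continuity for the right action of $\S(H)$ (Lemmas~\ref{Lem:MetricProperty}, \ref{Continuity} and~\ref{TweakNet}), and hence two facts: for any $H$-flow, the set of points lying in minimal $H$-subflows is closed (Proposition~\ref{Prop:APClosed}), and the relation ``lying in the same minimal $H$-subflow'' is closed (Proposition~\ref{Prop:APEquivClosed}). Since $\mathrm{AP}_H(\M(G))$ is $G$-invariant by normality of $H$, hence dense (Lemma~\ref{Lem:H-APDense}), the first fact gives $\mathrm{AP}_H(\M(G))=\M(G)$; the second makes $Y=\M(G)/E$ a compact Hausdorff minimal $K$-flow, and universality of $\M(K)$ forces the induced map $Y\to\M(K)$ to be an isomorphism, so each fiber $\pi^{-1}(\{y\})$ is exactly one minimal $H$-subflow. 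This is the argument that replaces your missing rigidity step, and it handles all $y$ at once.

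The second gap is the metrizability assembly. You correctly observe that a metrizable base with metrizable fibers is not by itself enough (the double-arrow space), but the injection $\Phi\colon\M(G)\to\M(K)\times Z$ you propose is never constructed; you only name the unresolved difficulty of ``controlling how the fibers vary.'' The paper avoids this problem entirely with a cardinality dichotomy: once every fiber is a compact metrizable minimal $H$-flow and the base $Y\cong\M(K)$ is compact metrizable, each has cardinality at most $\mathfrak{c}$, so $|\M(G)|\leq\mathfrak{c}\cdot\mathfrak{c}=\mathfrak{c}$; on the other hand, by Proposition 2.7.5 of \cite{ZucThesis}, a non-metrizable universal minimal flow of a Polish group must have cardinality $2^{\mathfrak{c}}$. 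Hence $\M(G)$ is metrizable, with no bundle-type analysis needed. If you want to salvage your outline, this dichotomy is the missing ingredient for the metrizability half, but the fiber-minimality half still requires the almost-periodic-point machinery (or some genuine substitute), since that is where metrizability of $\M(H)$ enters.
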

\vspace{2 mm}

Using this description of $\M(G)$, we are also able to prove that when the universal minimal flows are metrizable, then unique ergodicity is stable under group extension.
\vspace{2 mm}

\begin{theorem}
	\label{Thm:GUniqueErg}
	With $H$, $G$, and $K$ as in Theorem~\ref{Thm:MGMetrizable}, then if both $H$ and $K$ are uniquely ergodic, then $G$ is also uniquely ergodic.
\end{theorem}
\vspace{2 mm}

We briefly discuss the organization of the paper. Section~\ref{Sec:Background} gives background on the Samuel compactification of a topological group and the universal minimal flow. Section~\ref{Sec:APPoints} proves the main technical lemma regarding the \emph{almost periodic points} of a $G$-flow. Section~\ref{Sec:AbstractProof} proves the two main theorems. Section~\ref{Sec:Examples} provides examples of Polish group extensions. Section~\ref{Sec:ComboProof} provides a more combinatorial proof of part of Theorem~\ref{Thm:MGMetrizable} by using the connections between topological dynamics and Ramsey theory. Finally, Section~\ref{Sec:Questions} collects some open questions inspired by our work.
\vspace{2 mm}

\emph{Aknowledgements:} The authors thank Lionel Nguyen Van Th\'e and Todor Tsankov for their advice and comments during the writing of this paper.
\vspace{2 mm}

\section{Background}
\label{Sec:Background}





For this section, let $G$ be any topological group. A \emph{$G$-ambit} is a pair $(X, x_0)$ with $X$ a $G$-flow and $x_0\in X$ a distinguished point with dense orbit. If $(Y, y_0)$ is another ambit, we say that $\phi\colon X\to Y$ is a \emph{map of ambits} if $\phi$ is a $G$-map and $\phi(x_0) = y_0$. Notice that there is at most one map of ambits from $(X, x_0)$ to $(Y, y_0)$. One can in fact construct the \emph{greatest ambit}, denoted $(\S(G), 1_G)$, which is an ambit admitting a map of ambits onto any other ambit and is unique up to isomorphism. The orbit $1_G\cdot G$ is homeomorphic to (and identified with) $G$, and $\S(G)$ is often called the \emph{Samuel compactification} of $G$. As an example, when $G$ is a discrete group, then $\S(G)\cong \beta G$, the space of ultrafilters on $G$. In general, $\S(G)$ has the following universal property: if $X$ is a compact space and $f\colon G\to X$ is left-uniformly continuous, then $f$ can be continuously extended to $\S(G)$. For two different constructions of $\S(G)$, see \cite{KPT} or \cite{ZucThesis}.

The universal property of $\S(G)$ allows us to give $\S(G)$ the structure of a \emph{compact left-topological semigroup}, a compact space $S$ endowed with a semigroup structure so that for each $s\in S$, the map $\lambda_s\colon S\to S$ given by $\lambda_s(t) = st$ is continuous. Fix $p\in S(G)$. Then $(\overline{p\cdot G}, p)$ is an ambit, so there is a unique map of ambits $\lambda_p\colon (\S(G), 1_G)\to (\overline{p\cdot G}, p)$. Now given $p, q\in \S(G)$, we declare that $p\cdot q = \lambda_p(q)$. Associativity follows because $\lambda_p\circ \lambda_q$ and $\lambda_{pq}$ are both $G$-maps sending $1_G$ to $pq$, hence they must be equal.

More generally, let $X$ be a $G$-flow. Given $x\in X$, then $(\overline{x\cdot G}, x)$ is an ambit, so there is a unique map of ambits $\lambda_x\colon \S(G)\to X$. Given $p\in \S(G)$, we often write $x\cdot p := \lambda_x(p)$. Notice that if $p, q\in \S(G)$, then $x(pq) = (xp)q$, so the semigroup $\S(G)$ acts on $X$ in a manner which extends the $G$-action.

For a more detailed account of the theory of compact left-topological semigroups, Chapters 1 and 2 of \cite{HinStr} are a great reference (but note the left-right switch between that reference and the presentation here). We will need the following facts, all of which can be found there. Fix a compact left-topological semigroup $S$.

\begin{enumerate}
	\item 
	Every compact left-topological semigroup $S$ contains an \emph{idempotent}, an element $u\in S$ with $u\cdot u = u$.
	\item 
	A \emph{right ideal} is any $I\subseteq S$ for which $I\cdot s \subseteq I$ for every $s\in S$. Notice that if $p\in I$, then $p\cdot S\subseteq I$ is a closed right ideal. It follows that every right ideal contains a minimal right ideal which must be closed. 
	\item 
	If $I\subseteq S$ is a minimal right ideal, then $I$ is a compact left-topological semigroup in its own right, so contains an idempotent. If $u\in I$ is an idempotent, then $uI = I$, so $up = p$ for every $p\in I$. 
	\item
	If $I\subseteq S$ is a minimal right ideal and $p\in I$, then $S\cdot p$ is a minimal \emph{left} ideal, and $pS\cap Sp = Ip$, which is a group. If $u\in Ip$ is the identity of this group, then $Ip = Iu$. So for every $p\in I$, there is an idempotent $u\in I$ with $p\in Iu$.
\end{enumerate}

We now apply this to $\S(G)$. First note that the minimal right ideals of $\S(G)$ are exactly the minimal subflows of $\S(G)$. Notice also that every minimal subflow of $\S(G)$ is universal, simply by the universal property of $\S(G)$. We argue that $\M(G)$ is unique up to isomorphism, a classical theorem of Ellis \cite{ellis}. Fix $M\subseteq \S(G)$ a minimal right ideal, and let $u\in M$ be an idempotent. Suppose $\phi\colon M\to M$ is a $G$-map. Then by Fact 3, we have $\phi(p) = \phi(up) = \phi(u)p$ for any $p\in M$, hence $\phi = \lambda_{\phi(u)}|_M$. By Fact 4, we have $\phi(u) \in Mv$ for some idempotent $v$. Then since $Mv$ is a group with identity $v$, we can find $q\in M$ with $q\phi(u) = v$. Notice that $\lambda_v|_M$ is the identity on $M$ (since $M = vM$). Since $\lambda_v|_M = \lambda_q\circ \lambda_{\phi(u)}|_M$, the map $\lambda_{\phi(u)}|_M = \phi$ must be a bijection, hence a $G$-flow isomorphism. If $N$ is another minimal flow which is universal, then let $\psi\colon M\to N$ and $\theta\colon N\to M$ be $G$-maps. If $M\not\cong N$, then $\theta\circ \psi$ is not injective, contradicting that every $G$-map from $M$ to itself is an isomorphism. 

Furthermore, suppose $X$ is a minimal $G$-flow, and suppose $\phi$ and $\psi$ are two $G$-maps from $M$ to $X$. Let $u\in M$ be an idempotent, and consider $\psi^{-1}(\{\phi(u)\})\subseteq M$. If $p\in \psi^{-1}(\{\phi(u)\})$, then $\psi(pu) = \psi(p)u = \phi(u)u = \phi(uu) = \phi(u)$. It follows that $\psi\circ \lambda_p = \phi$, i.e.\ there is only one $G$-map from $M$ to $X$ up to isomorphism.

Now suppose $K$ is another topological group and that $\pi\colon G\to K$ is a continuous surjective homomorphism. We note that every $K$-flow is also a $G$-flow, where if $X$ is a $K$-flow, $x\in X$, and $g\in G$, we set $x\cdot g = x\cdot \pi(g)$. The map $\pi$ continuously extends to a map from $\S(G)$ to $\S(K)$, which we also denote by $\pi$. If $M\subseteq \S(G)$ is a minimal subflow, then $\pi[M]\subseteq \S(K)$ is also minimal and is isomorphic to $\M(K)$.
\vspace{2 mm}  

\section{Almost periodic points}
\label{Sec:APPoints}

This section proves the following key propositions which will be used in the proof of Theorems~\ref{Thm:MGMetrizable} and~\ref{Thm:GUniqueErg}. Throughout this section, we consider a Polish group $H$, which will be the same $H$ that appears in the main theorems. We fix on $H$ a compatible left-invariant metric $d$ of diameter one, and for $c> 0$, we set $U_c:= \{g\in H: d(1_H, g)< c\}$.

Given an $H$-flow $X$, the \emph{almost periodic} points of $X$, denoted $\mathrm{AP}(X)$, are those points in $X$ belonging to minimal subflows. 
\vspace{2 mm}

\begin{prop}
	\label{Prop:APClosed}
	Let $H$ be a Polish group, and suppose that $\M(H)$ is metrizable. Then for any $H$-flow $X$, the set $\mathrm{AP}(X)\subseteq X$ is closed.
\end{prop}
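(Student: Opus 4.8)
The plan is to fix, once and for all, a minimal right ideal $M\subseteq \S(H)$, which by the remarks at the end of Section~\ref{Sec:Background} is isomorphic to $\M(H)$ and hence metrizable. I will work with the characterization of almost periodicity coming from the semigroup structure: a point $x\in X$ is almost periodic if and only if $x\in xM$, equivalently $xp=x$ for some $p\in M$. (If $xp=x$ with $p\in M$, then $x\in xM$, and $xM$ is a minimal subflow; conversely, if $\overline{xH}$ is minimal then $xM$ is a minimal subflow of it, so $xM=\overline{xH}\ni x$.) This lets me write $\mathrm{AP}(X)$ as the image of the set $R=\{(x,p)\in X\times M: xp=x\}$ under the projection $X\times M\to X$. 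Since $M$ is compact, this projection is a closed map, so the whole problem reduces to showing that $R$ is closed in $X\times M$.

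The input from metrizability that I would isolate first is a \emph{uniform recurrence modulus} on $\M(H)$. Fixing a compatible metric $\partial$ on $\M(H)$, I claim that for every $\epsilon>0$ there is a \emph{finite} set $F_\epsilon\subseteq H$ such that for all $p,q\in \M(H)$ there is some $f\in F_\epsilon$ with $\partial(pf,q)<\epsilon$. This is proved by pure minimality and compactness: cover $\M(H)$ by finitely many balls $B(\eta_1,\epsilon/2),\dots,B(\eta_k,\epsilon/2)$; for each $j$ the open sets $\{p:\partial(pg,\eta_j)<\epsilon/2\}$, indexed by $g\in H$, cover $\M(H)$ because every orbit is dense, so compactness yields a finite $G_j$, and $F_\epsilon=\bigcup_j G_j$ works. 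The point is that the single metric $\partial$ (available exactly because $\M(H)$ is metrizable) produces a finite return set that is uniform over the entire flow, whereas for a non-metrizable universal minimal flow no such finite modulus need exist.

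Next I would transfer this recurrence to the subflows of $X$. Every minimal subflow $Z=\overline{xH}\subseteq X$ is a factor of $\M(H)$ via $\lambda_x|_M\colon \M(H)\to Z$; composing the modulus $F_\epsilon$ with (uniform) continuity of $\lambda_x|_M$ shows that each minimal subflow inherits a finite, entourage-controlled return set. Translating back into the semigroup picture, this is what should force $R$ to be closed: given a convergent net $(x_i,p_i)\to(x,p)$ in $R$, the uniform recurrence lets one approximate returns using genuine group elements $f\in H$, which act on $X$ by homeomorphisms, and then pass to the limit to conclude $xp=x$. The structure of the argument is thus: metrizability $\Rightarrow$ uniform modulus on $\M(H)$ $\Rightarrow$ closedness of $R$ $\Rightarrow$ closedness of $\mathrm{AP}(X)$.

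The main obstacle is precisely the last transfer. The action $X\times \S(H)\to X$ is continuous in the semigroup variable for fixed $x$, but in the flow variable it is continuous \emph{only} along $H\subseteq \S(H)$; it is not jointly continuous, and it is this failure that prevents one from simply taking limits in $R$. Consequently the recurrence moduli produced by the factor maps $\lambda_x|_M$ are, a priori, not uniform across the (possibly infinitely many and arbitrarily wild) minimal subflows of $X$, so the delicate part is to show that along a net of almost periodic points converging to $x$ these recurrence scales do not degenerate. I expect that this is where metrizability of $\M(H)$ must be used a second time — through the fact that $p\mapsto(x\mapsto xp)$ is continuous into the pointwise topology together with a compactness argument inside $X$ — so that only finitely many group elements from a fixed $F_\epsilon$ are ever needed and the limit point $x$ genuinely satisfies $x\in xM$.
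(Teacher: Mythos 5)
Your reduction is sound as far as it goes: the characterization $\mathrm{AP}(X)=\{x: xp=x \text{ for some } p\in M\}$ is correct, projecting $X\times M\to X$ along the compact factor $M$ is indeed a closed map, and your ``uniform recurrence modulus'' lemma (finite $F_\epsilon\subseteq H$ with $\partial(pf,q)<\epsilon$ for some $f\in F_\epsilon$, for all $p,q$) is a true and correctly proved consequence of minimality plus compactness plus metrizability. But the proposal stops exactly where the proposition becomes nontrivial. You yourself flag the ``last transfer'' --- showing that the recurrence control does not degenerate along a net of almost periodic points $x_i\to x$ --- as the main obstacle, and you offer only the expectation that metrizability ``must be used a second time.'' That transfer \emph{is} the content of the proposition: the relation $\{(x,p):xp=x\}$ has no a priori reason to be closed, since $x\mapsto xp$ is not continuous, and your modulus $F_\epsilon$ lives on $\M(H)$ with respect to an \emph{arbitrary} compatible metric $\partial$. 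Pushing it into a minimal subflow $X_i$ through $\lambda_{x_i}|_M$ costs the modulus of continuity of $\lambda_{x_i}|_M$, which depends on $i$; nothing in your setup prevents these moduli from collapsing as $i\to\infty$, which is precisely the degeneration you acknowledge you cannot rule out.

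The missing idea, which the paper supplies, is that one should not use an arbitrary compatible metric but the \emph{canonical} one coming from the structure theorem of Melleray--Nguyen Van Th\'e--Tsankov and Ben Yaacov--Melleray--Tsankov: $\M(H)\cong \widehat{H^*\backslash H}$ for an extremely amenable co-precompact $H^*\subseteq H$, so that $\partial$ is inherited from the left-invariant metric $d$ on $H$. This metric has the key property (the paper's Lemma~\ref{Lem:MetricProperty}) that $\partial(p,q)<c$ together with $A\ni p$ open implies $q\in \overline{AU_c}$: $\partial$-closeness in $\M(H)$ is witnessed by \emph{small group elements}. Since elements of $U_c$ act on all of $X$ simultaneously, this converts metric estimates on $\M(H)$ into estimates in $X$ that are uniform over all the factor maps $\phi_i\colon \M(H)\to X_i$ at once --- exactly the uniformity your plan lacks. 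From this the paper derives two continuity fragments (Lemmas~\ref{Continuity} and~\ref{TweakNet}): if $\phi_i(p)\to x$, $\phi_i(q)\to y$ and $pr=q$ with $r\in\S(H)$, then $xr=y$; and if $\phi_i(p_i)\to x$ with $p_i\to p$, then $\phi_i(p)\to x$. Given $x_i\to x$ with $\phi_i(p_i)=x_i$, one passes to a subnet with $p_i\to p$, concludes $\phi_i(p)\to x$, and then applies the first lemma with a minimal idempotent $u$ satisfying $pu=p$ to get $xu=x$. Note also that this is weaker than what your reduction demands: the paper never proves that your set $R$ is closed (i.e.\ that the \emph{limit} $p$ of the $p_i$ fixes $x$); it only produces \emph{some} idempotent $u\in\S(H)$ fixing $x$, which already suffices. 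Insisting on closedness of $R$ may be overshooting, and in any case would still require the same (absent) uniformity mechanism.
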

\vspace{2 mm}

The assumption that $\M(H)$ is metrizable is essential. Hindman and Strauss in \cite{HinStr2} show that when $H = \mathbb{Z}$ and $X = \beta \mathbb{Z}$, then $\mathrm{AP}(X)\subseteq X$ is not even Borel. In a work in preparation, Barto\v sov\'a and Zucker generalize this, showing that for any Polish group $H$ with $\M(H)$ non-metrizable and $X = S(H)$, then $\mathrm{AP}(X)\subseteq X$ is not Borel.

On $\mathrm{AP}(X)$, the relation given by $E(x,y)$ iff $x$ and $y$ belong to the same minimal subflow of $X$ is an equivalence relation, and one can ask about the complexity of this equivalence relation. It turns out that in the setting of Theorem \ref{Prop:APClosed}, this equivalence relation is as nice as possible. 
\vspace{2 mm}

\begin{prop}
	\label{Prop:APEquivClosed}
	In the setting of Proposition \ref{Prop:APClosed}, the equivalence relation $E\subseteq \mathrm{AP}(X)\times \mathrm{AP}(X)$ is closed.
\end{prop}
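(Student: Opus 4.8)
The plan is to give an explicit semigroup-theoretic description of $E$ and then close it by a limiting argument, reducing everything to a single uniform estimate. Fix once and for all a minimal right ideal $M\subseteq\S(H)$; by the discussion in Section~\ref{Sec:Background}, $M$ is isomorphic to $\M(H)$ and hence compact and metrizable. First I would record the following description of the minimal subflow through an almost periodic point: for any $x\in X$ one has $\overline{x\cdot H}=\lambda_x[\S(H)]=x\cdot\S(H)$, and if in addition $x\in\mathrm{AP}(X)$, then $x\cdot M$ is a nonempty, closed, $H$-invariant subset of the minimal flow $\overline{x\cdot H}$, whence $x\cdot M=\overline{x\cdot H}$. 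Consequently two almost periodic points lie in the same minimal subflow exactly when $y\in x\cdot M$, so that $E=\{(x,\,x\cdot p):x\in\mathrm{AP}(X),\ p\in M\}$. Symmetry of this description is automatic: if $x\in\mathrm{AP}(X)$ and $y=x\cdot p$ with $p\in M$, then $y\in\overline{x\cdot H}$ is itself almost periodic with $\overline{y\cdot H}=\overline{x\cdot H}$.

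Next I would take limits of pairs in $E$, working with nets since $X$ need not be metrizable. Suppose $(x_i,y_i)\to(x,y)$ with $(x_i,y_i)\in E$. By Proposition~\ref{Prop:APClosed} the set $\mathrm{AP}(X)$ is closed, so $x,y\in\mathrm{AP}(X)$, and it remains only to show $y\in\overline{x\cdot H}$, for then $\overline{y\cdot H}=\overline{x\cdot H}$ and $E(x,y)$ holds. It is tempting to write $y_i=x_i\cdot p_i$, pass to a convergent subnet $p_i\to p\in M$, and conclude $y=x\cdot p$; but the semigroup action $(x,p)\mapsto x\cdot p$ is not jointly continuous, and this is precisely the failure that makes $E$ non-closed for non-metrizable $\M(H)$. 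So I would instead argue by contradiction. Suppose $y\notin\overline{x\cdot H}$. Since $X$ is compact Hausdorff, it carries a unique compatible uniformity; choosing a symmetric open entourage $\mathcal U$ with $\overline{\mathcal U}[y]\cap\overline{x\cdot H}=\emptyset$, I aim to exhibit a point of $\overline{x\cdot H}$ inside $\overline{\mathcal U}[y]$.

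The mechanism is as follows. Each $y_i$ lies in the minimal flow $\overline{x_i\cdot H}=\overline{y_i\cdot H}$, so $x_i\cdot H$ comes arbitrarily $\mathcal U$-close to $y_i$; the point is to realize this using group elements drawn from a single compact set. Granting a \emph{uniform syndeticity} estimate --- for the entourage $\mathcal U$ there is a fixed compact $F\subseteq H$ such that for every $x'\in\mathrm{AP}(X)$ the set $x'\cdot F$ is $\mathcal U$-dense in $\overline{x'\cdot H}$ --- one may choose $h_i\in F$ with $(x_i\cdot h_i,\,y_i)\in\mathcal U$. Passing to a subnet, $h_i\to h\in F$ by compactness of $F$, and since the $H$-action $X\times H\to X$ is jointly continuous we get $x_i\cdot h_i\to x\cdot h$; as $y_i\to y$ and $\mathcal U\subseteq\overline{\mathcal U}$ with $\overline{\mathcal U}$ closed, the limit satisfies $(x\cdot h,\,y)\in\overline{\mathcal U}$. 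But $x\cdot h\in\overline{x\cdot H}$, contradicting $\overline{\mathcal U}[y]\cap\overline{x\cdot H}=\emptyset$. This yields $y\in\overline{x\cdot H}$ and closes the argument.

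The crux, and the only place metrizability of $\M(H)$ enters, is the uniform syndeticity estimate across the varying minimal subflows $\overline{x_i\cdot H}$. Within a single minimal flow such an $F$ is produced by a routine compactness argument on $M\times M$, using joint continuity of the action at each fixed group element; the difficulty is that the subflows $\overline{x_i\cdot H}$, though all isomorphic to $\M(H)$, sit inside $X$ in genuinely different ways, so one needs $F$ to depend only on the entourage $\mathcal U$ of $X$ and not on $i$. This is exactly the uniform control that metrizability of $\M(H)$ supplies and that drives the proof of Proposition~\ref{Prop:APClosed}, which I would reuse here. It is also where the hypothesis is indispensable: the Hindman--Strauss and Barto\v sov\'a--Zucker examples for non-metrizable $\M(H)$ show that no such uniformity, and hence no such closedness, can be expected in general.
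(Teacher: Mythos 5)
Your reduction is essentially sound in structure, and your route is genuinely different from the paper's. The semigroup description $E=\{(x,x\cdot p):x\in\mathrm{AP}(X),\ p\in M\}$ is correct, the contradiction setup with the entourage $\mathcal{U}$ is legitimate, and \emph{granting} your uniform syndeticity claim, the finishing argument (choose $h_i\in F$, pass to a subnet $h_i\to h$, use joint continuity of $X\times H\to X$ to get $(x\cdot h,y)\in\overline{\mathcal{U}}$) is airtight. By contrast, the paper never mentions syndeticity: it writes $x=\lim\phi_i(p)$ and $y=\lim\phi_i(q)$ for a single pair $p,q\in\M(H)$ (via Lemma~\ref{TweakNet}), picks $r\in\S(H)$ with $pr=q$ by minimality of $\M(H)$, and concludes $xr=y$ from Lemma~\ref{Continuity}, so that $y\in x\cdot\S(H)=\overline{x\cdot H}$ directly. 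Your approach replaces that pointwise recovered continuity of the $\S(H)$-action by a global quantitative statement about all minimal subflows at once.

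The gap is that this global quantitative statement --- the uniform syndeticity estimate, which you yourself identify as the crux --- is asserted rather than proved, and the justification you offer for it cannot work. Proposition~\ref{Prop:APClosed} is a purely qualitative statement (a certain set is closed); no amount of ``reusing'' it yields an estimate of the form ``one compact $F\subseteq H$ is $\mathcal{U}$-dense in $\overline{x'\cdot H}$ for \emph{every} $x'\in\mathrm{AP}(X)$ simultaneously,'' since closedness of $\mathrm{AP}(X)$ gives no control over how the various minimal subflows sit inside $X$. What actually supplies the uniformity is Lemma~\ref{Lem:MetricProperty}, and deriving your claim from it requires real (if routine) work: (i) fixing $H$-maps $\phi_i\colon\M(H)\to X_i$ onto the minimal subflows, Lemma~\ref{Lem:MetricProperty} together with compactness of $X$ (cover $X$ by finitely many open sets $C$ with $\overline{CU_\epsilon}$ contained in a $\mathcal{U}$-ball, exactly the maneuver in the proofs of Lemmas~\ref{Continuity} and~\ref{TweakNet}) shows the family $\{\phi_i\}$ is \emph{uniformly equicontinuous} from $(\M(H),\partial)$ to the uniformity of $X$; (ii) a compactness argument on $\M(H)\times\M(H)$ produces, for each $\epsilon>0$, a finite $F\subseteq H$ with $p\cdot F$ $\epsilon$-dense in $\M(H)$ for every $p$; (iii) pushing (ii) through the surjections $\phi_i$ using (i) gives your claim, since every point of $X_i$ is $\phi_i$ of something. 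So your proof can be completed, but the one step where metrizability of $\M(H)$ enters --- the only step that distinguishes this situation from the Hindman--Strauss counterexamples --- is precisely the step left unproven, and it must be traced back to Lemma~\ref{Lem:MetricProperty}, not to Proposition~\ref{Prop:APClosed}.
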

\vspace{2 mm}

Combining the key results from \cite{MNT} and \cite{BYMT}, we have the following.
\vspace{2 mm}

\begin{fact}
	\label{Fact:MNTandBYMT}
	Whenever $\M(H)$ is metrizable, then there is an extremely amenable, co-precompact subgroup $H^*\subseteq H$ so that $\M(H)\cong \widehat{H^*\backslash H}$, the left completion of the right coset space.
\end{fact}
\vspace{2 mm}

In particular, $\M(H)$ comes equipped with a canonical compatible metric $\partial$ inherited from the metric $d$ on $H$. The key property we need about this metric is the following.
\vspace{2 mm}

\begin{lemma}
	\label{Lem:MetricProperty}
	Let $H$ be a Polish group, and assume $\M(H)\cong \widehat{H^*\backslash H}$ is metrizable with the compatible metric $\partial$ inherited from $d$. Then whenever $\partial(p, q) < c$ and $A\ni p$ is open, we have $q\in \overline{AU_c}$.
\end{lemma}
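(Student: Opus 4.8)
The plan is to reduce to the case where $p$ and $q$ are genuine cosets, where the statement follows from a direct computation with the left-invariant metric, and then to recover the general case by density and approximation. Throughout I use three facts about the presentation $\M(H)\cong \widehat{H^*\backslash H}$: that the coset space $H^*\backslash H$ is dense in its completion $\M(H)$; that the $H$-action on $\M(H)$ extends the right-translation action $(H^*g)\cdot h = H^*(gh)$ on cosets; and that $\partial$ restricted to the cosets is the quotient metric inherited from $d$, namely $\partial(H^*g, H^*g') = \inf_{k\in H^*} d(g, kg')$ (the two natural forms of this infimum agreeing by left-invariance).

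First I would treat the coset case: suppose $\tilde p = H^*g$ and $\tilde q = H^*g'$ satisfy $\partial(\tilde p,\tilde q) < c$. By the formula for $\partial$ there is $k\in H^*$ with $d(g, kg') < c$. Setting $u := g^{-1}kg'$, left-invariance gives $d(1_H, u) = d(g, kg') < c$, so $u\in U_c$; and since $gu = kg' \in H^*g'$, we get $\tilde p\cdot u = H^*(gu) = H^*g' = \tilde q$. Thus in the coset case one obtains the sharp equality $\tilde q = \tilde p\cdot u$ with $u\in U_c$, which is stronger than mere membership in a closure. This step is where the explicit shape of $\partial$ is used, and it is routine once left-invariance is in hand.

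To handle general $p, q$, fix $c'$ with $\partial(p,q) < c' < c$, and pick $\delta > 0$ with $B_\partial(p,\delta)\subseteq A$ (possible since $A$ is open and $p\in A$) and $\delta < c - c'$. By density choose cosets $\tilde q_n \to q$; for $n$ large, $\partial(p,\tilde q_n) < c'$. For each such $n$, again by density choose a coset $\tilde p_n$ with $\partial(p,\tilde p_n) < \delta$, so that $\tilde p_n\in A$ and, by the triangle inequality, $\partial(\tilde p_n,\tilde q_n) \leq \partial(\tilde p_n,p) + \partial(p,\tilde q_n) < \delta + c' < c$. The coset case then furnishes $u_n\in U_c$ with $\tilde q_n = \tilde p_n\cdot u_n \in A\cdot U_c$, and since $\tilde q_n \to q$ we conclude $q\in\overline{AU_c}$. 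The only point requiring genuine care is keeping the approximating coset $\tilde p_n$ inside the open set $A$ while simultaneously controlling $\partial(\tilde p_n,\tilde q_n)$ below $c$; openness of $A$ supplies the ball $B_\partial(p,\delta)$ that makes both constraints compatible, and I anticipate no obstacle beyond this bookkeeping.
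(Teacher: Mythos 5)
Your proof is correct and follows essentially the same route as the paper's: approximate $p$ and $q$ by cosets in $H^*\backslash H$, use left-invariance of $d$ to produce an element of $U_c$ carrying the $p$-approximant exactly onto the $q$-approximant, and pass to the limit to land in $\overline{AU_c}$. The paper compresses your explicit $c'$/$\delta$ bookkeeping into the phrase ``by modifying $q_n$ if necessary, we may assume $p_n^{-1}q_n\in U_c$,'' but the underlying argument is identical.
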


\begin{proof}
	Fix sequences $p_n, q_n\in H$ with $H^*p_n\to p$ and $H^*q_n\to q$. We may assume for every $n< \omega$ that $d(H^*p_n, H^*q_n) < c$. By modifying $q_n$ if necessary, we may assume $p_n^{-1}q_n\in U_c$. Now if $A\ni p$ is open, then eventually $H^*p_n\in A$. Then $H^*q_n\in AU_c$, implying that $q\in \overline{AU_c}$ as desired.
\end{proof}
\vspace{2 mm}

We now assume $\M(H)$ metrizable with a compatible metric $\partial$ as in Lemma \ref{Lem:MetricProperty}, and we fix an $H$-flow $X$. Consider some collection $\{X_i: i\in I\}$ of minimal subflows of $X$; we will treat $I$ as a directed partial order. For each $i\in I$, let $\phi_i\colon \M(H)\to X_i$ be an $H$-map. The key lemma regards the right action of $\S(H)$ on $X$. In general, this action is not continuous, but the lemma states that in this setting, we recover some fragments of continuity.
\vspace{2 mm}

\begin{lemma}
	\label{Continuity}
	Suppose we have $p, q\in \M(H)$ with $\phi_i(p)\to x$ and $\phi_i(q)\to y$. Suppose $r\in \S(H)$ with $pr = q$. Then $xr = y$.
\end{lemma}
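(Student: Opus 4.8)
The plan is to quarantine the one discontinuity in sight — the right action of $\S(H)$ on $X$ is not jointly continuous — by approximating $r$ with honest group elements, where the action \emph{is} continuous, and then controlling the resulting limit uniformly in $i$ by combining Lemma~\ref{Lem:MetricProperty} with the compactness of $X$. Since $\S(H)=\overline{H}$, fix a net $(h_\alpha)$ in $H$ with $h_\alpha\to r$. Because $\lambda_p$ is continuous and $pr=q$, we get $ph_\alpha\to q$ in $\M(H)$, and as $\M(H)$ is metrizable with the metric $\partial$ this means $c_\alpha:=\partial(ph_\alpha,q)\to 0$. For each fixed $\alpha$, right multiplication by the group element $h_\alpha$ is a homeomorphism of $X$, so from $\phi_i(p)\to x$ and equivariance we get $\phi_i(ph_\alpha)=\phi_i(p)h_\alpha\to xh_\alpha$ along the net in $i$; and since $\lambda_x$ is continuous, $xh_\alpha\to xr$. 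Thus it suffices to prove that $xh_\alpha\to y$, for then uniqueness of limits in the Hausdorff space $X$ forces $xr=y$.

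To show $xh_\alpha\to y$, I work with the unique uniformity on the compact Hausdorff space $X$ and show that $(xh_\alpha,y)$ lies in every entourage once $\alpha$ is large. Fix an entourage $E$ and choose an open symmetric entourage $E_1$ with $\overline{E_1\circ E_1}\subseteq E$. The key uniform input comes from compactness: a standard tube argument shows that the continuous action $X\times H\to X$ is uniformly continuous near $1_H$, so there is $c^*>0$ with $(z,zu)\in E_1$ for every $z\in X$ and every $u\in U_{c^*}$. Now fix any $\alpha$ with $c_\alpha<c^*$. For each index $i$, apply Lemma~\ref{Lem:MetricProperty} in $\M(H)$ to the pair $ph_\alpha,q$ (with constant $c^*$) and to an open set $A_i\ni ph_\alpha$ chosen small enough that $\phi_i(A_i)\subseteq E_1[\phi_i(ph_\alpha)]$; this yields $q\in\overline{A_iU_{c^*}}$, hence a net $a_\delta u_\delta\to q$ with $a_\delta\in A_i$ and $u_\delta\in U_{c^*}$. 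Applying the continuous map $\phi_i$ and using equivariance gives $\phi_i(a_\delta)u_\delta\to\phi_i(q)$, while $(\phi_i(ph_\alpha),\phi_i(a_\delta))\in E_1$ and $(\phi_i(a_\delta),\phi_i(a_\delta)u_\delta)\in E_1$; passing to the limit in $\delta$ yields $(\phi_i(ph_\alpha),\phi_i(q))\in\overline{E_1\circ E_1}\subseteq E$.

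The point of the previous paragraph is that this last containment holds for \emph{every} $i$ with the same $\alpha$: although the auxiliary neighborhood $A_i$ depends on $i$, the two quantities that actually control the estimate — the distance $c_\alpha=\partial(ph_\alpha,q)$, computed in $\M(H)$, and the threshold $c^*$, coming from compactness of $X$ — do not. Taking the limit along $i$, using $\phi_i(ph_\alpha)\to xh_\alpha$ and $\phi_i(q)\to y$ together with the closedness of $\overline{E_1\circ E_1}$, we conclude $(xh_\alpha,y)\in\overline{E_1\circ E_1}\subseteq E$ for every $\alpha$ with $c_\alpha<c^*$. As $E$ was arbitrary and $c_\alpha\to 0$, this proves $xh_\alpha\to y$, and hence $xr=y$.

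The main obstacle is exactly the failure of joint continuity of the $\S(H)$-action: if multiplication by $r$ were continuous one could simply push $\phi_i(p)\to x$ through it. The whole argument is built to avoid this — $r$ is replaced by the group elements $h_\alpha$, so that only the genuinely continuous $H$-action ever moves points of $X$, and the error is absorbed uniformly in $i$. I expect the most delicate bookkeeping to be verifying that this control is truly uniform over the (possibly large) index net, i.e.\ confirming that the single $i$-dependent choice, the neighborhood $A_i$, is discharged inside the per-$i$ estimate and never surfaces in the final bound.
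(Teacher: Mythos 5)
Your proof is correct and is essentially the paper's argument: approximate $r$ by a net of group elements, use Lemma~\ref{Lem:MetricProperty} together with the metrizability of $\M(H)$ to get an estimate on $\phi_i(ph_\alpha)$ versus $\phi_i(q)$ that is uniform in $i$, then pass to the limit in $i$ and finally in $\alpha$, concluding by uniqueness of limits. The only differences are cosmetic: you apply Lemma~\ref{Lem:MetricProperty} with the small open set around $ph_\alpha$ rather than around $q$ (harmless, since $\partial$ is symmetric), and you encode the compactness input via entourages and uniform equicontinuity of the action near $1_H$, where the paper simply chooses $C\ni y$ open and $\epsilon>0$ with $\overline{CU_\epsilon}\subseteq B$ and pushes $\overline{\phi_i^{-1}(C)U_\epsilon}\subseteq\phi_i^{-1}(\overline{CU_\epsilon})$ through $\phi_i$.
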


\begin{proof}
	Fix an open $B\ni y$, and fix a net $(g_j)_{j\in J}$ from $H$ with $g_j\to r$. We want to show that eventually $xg_j\in B$. Find some open $C\ni y$ and $\epsilon > 0$ with $\overline{CU_\epsilon}\subseteq B$. Eventually $\partial(pg_j, q) < \epsilon$; fix such a $g_j$. Eventually $\phi_i(q)\in C$, so by Lemma \ref{Lem:MetricProperty} for such $i\in I$ we have $pg_j\in \overline{\phi_i^{-1}(C)U_\epsilon} \subseteq \phi_i^{-1}(\overline{CU_\epsilon})$. So $\phi_i(pg_j) = \phi_i(p)g_j\in \overline{CU_\epsilon}$. As this is true for all large enough $i\in I$, we have $xg_j\in \overline{CU_\epsilon}\subseteq B$ as desired.
\end{proof}
\vspace{2 mm}

The other lemma we will need allows us to express points in $\overline{\mathrm{AP}(X)}$ as limits of certain nice nets. The proof is almost identical to that of Lemma \ref{Continuity}.
\vspace{2 mm}

\begin{lemma}
	\label{TweakNet}
	Suppose $\phi_i$ are as above, and suppose we have $p_i\in \M(H)$ with $\phi_i(p_i)\to x\in X$. If $p_i\to p\in \M(H)$, then $\phi_i(p)\to x$.
\end{lemma}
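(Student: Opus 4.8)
The plan is to mimic the proof of Lemma \ref{Continuity}, but the argument is in fact slightly simpler, since there is no semigroup element to approximate by a net from $H$: here we directly compare $\phi_i(p)$ with $\phi_i(p_i)$, using the metric convergence $p_i\to p$ in $\M(H)$ in place of the net $g_j\to r$.

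First I would fix an open neighborhood $B\ni_{op} x$ in $X$; the goal is to show that eventually $\phi_i(p)\in B$. Using continuity of the $H$-action at $(x, 1_H)$ together with the regularity of the compact Hausdorff space $X$, I choose an open set $C\ni_{op} x$ and some $\epsilon > 0$ with $\overline{CU_\epsilon}\subseteq B$, exactly as in the proof of Lemma \ref{Continuity}. Next, since $p_i\to p$ in $\M(H)$, eventually $\partial(p_i, p) < \epsilon$; and since $\phi_i(p_i)\to x$, eventually $\phi_i(p_i)\in C$, i.e.\ $p_i\in \phi_i^{-1}(C)$, which is open by continuity of $\phi_i$. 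As $I$ is directed, for all $i$ large enough that both conditions hold, I apply Lemma \ref{Lem:MetricProperty} with the open set $A = \phi_i^{-1}(C)\ni p_i$ and the pair of points $p_i, p$, so that $p_i$ plays the role of the point lying inside $A$; this yields $p\in \overline{\phi_i^{-1}(C)U_\epsilon}$.

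Finally, because $\phi_i$ is an $H$-map we have $\phi_i(\phi_i^{-1}(C)U_\epsilon)\subseteq CU_\epsilon$, so the closed set $\phi_i^{-1}(\overline{CU_\epsilon})$ contains $\phi_i^{-1}(C)U_\epsilon$ and hence contains its closure; thus $p\in \phi_i^{-1}(\overline{CU_\epsilon})$, giving $\phi_i(p)\in \overline{CU_\epsilon}\subseteq B$. Since this holds for all sufficiently large $i\in I$, we conclude $\phi_i(p)\to x$, as desired. The only steps requiring any care are the correct assignment of the two points in the hypothesis of Lemma \ref{Lem:MetricProperty}—the point belonging to the open set must be $p_i$ rather than $p$—and the verification of the $H$-map inclusion $\overline{\phi_i^{-1}(C)U_\epsilon}\subseteq \phi_i^{-1}(\overline{CU_\epsilon})$; neither poses a genuine obstacle, which is why the result follows along the same lines as Lemma \ref{Continuity}.
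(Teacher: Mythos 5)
Your proof is correct and follows essentially the same route as the paper's: fix $B\ni_{op} x$, choose $C$ and $\epsilon$ with $\overline{CU_\epsilon}\subseteq B$, use the eventual conditions $\phi_i(p_i)\in C$ and $\partial(p_i,p)<\epsilon$ to apply Lemma~\ref{Lem:MetricProperty} with $A=\phi_i^{-1}(C)$, and conclude via $\overline{\phi_i^{-1}(C)U_\epsilon}\subseteq\phi_i^{-1}(\overline{CU_\epsilon})$. Your care about which point must lie in the open set ($p_i$, not $p$) and your verification of that last inclusion using the $H$-map property are exactly the details the paper leaves implicit.
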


\begin{proof}
	Fix an open $B\ni x$; we want to show that eventually $\phi_i(p)\in B$. Find some open $C\ni x$ and $\epsilon > 0$ so that $\overline{CU_\epsilon}\subseteq B$. Eventually we have $\phi_i(p_i)\in C$ and $\partial(p, p_i) < \epsilon$. For such $i\in I$, since $\phi_i^{-1}(C)\ni p_i$, we have by Lemma \ref{Lem:MetricProperty} that $p\in \overline{\phi_i^{-1}(C)U_\epsilon} = \overline{\phi_i^{-1}(CU_\epsilon)}\subseteq \phi^{-1}(\overline{CU_\epsilon})$. It follows that $\phi_i(p)\in B$ as desired.
\end{proof}
\vspace{2 mm}

We can now easily complete the proof of both key propositions. First suppose $x_i\in \mathrm{AP}(X)$ with $x_i\to x$. Each $x_i$ belongs to some minimal flow $X_i$, so fix $H$-maps $\phi_i\colon \M(H)\to X_i$. Also fix $p_i\in \M(H)$ with $\phi_i(p_i) = x_i$. By passing to a subnet, we may assume $p_i\to p$, so by Lemma \ref{TweakNet}, we have $\phi_i(p)\to x$. Now fix a minimal subflow $M\subseteq \S(H)$, and consider the $H$-flow isomorphism $\lambda_p|_M\colon M\to \M(H)$. If $u\in M$ is such that $pu = p$, then by Lemma \ref{Continuity}, we have $xu = x$, i.e.\ that $x\in \lambda_x[M]$, showing that $x\in \mathrm{AP}(X)$ as desired.

For the second proposition, suppose $(x_i, y_i)\in E$ with $x_i\to x$ and $y_i\to y$. Much as above, we may assume that there are $p, q\in \M(H)$ with $\phi_i(p)\to x$ and $\phi_i(q)\to y$. Now suppose $r\in \S(H)$ with $pr = q$. By Lemma \ref{Continuity}, we have $xr = y$. It follows that $(x, y)\in E$ as desired.
\vspace{2 mm}

\section{Abstract proof of Theorems~\ref{Thm:MGMetrizable} and~\ref{Thm:GUniqueErg}}
\label{Sec:AbstractProof}

This section applies the key propositions from Section~\ref{Sec:APPoints} to prove the two main theorems from the introduction. Fix a short exact sequence $1\to H\to G\to K\to 1$ of Polish groups, and let $d$ be a compatible left-invariant metric on $G$ with diameter $1$. Then $d$ induces compatible left-invariant metrics on $H$ and $K$, which we also denote by $d$. Given $c> 0$, we set $U_c = \{g\in G: d(1_G, g) < c\}$. Then $U_c\cap H$ is the ball of radius $c$ around $1_H = 1_G$ in $H$, and $HU_c$ is the ball of radius $c$ around $1_K = H$ in $K$.

We first tackle Theorem~\ref{Thm:MGMetrizable}. The assumption that $\M(K)$ is metrizable is only used at the very end, but the assumption that $\M(H)$ is metrizable is used throughout the proof. Indeed, the proof proceeds by viewing $\M(G)$ as an $H$-flow. We write $\mathrm{AP}_H(\M(G))$ for those points in $\M(G)$ belonging to minimal $H$-subflows.
\vspace{2 mm}

\begin{lemma}
	\label{Lem:H-APDense}
	The set $\mathrm{AP}_H(\M(G))\subseteq \M(G)$ is $G$-invariant, hence dense.
\end{lemma}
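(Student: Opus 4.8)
The plan is to prove $G$-invariance directly and then deduce density from minimality of $\M(G)$ as a $G$-flow; no metrizability hypothesis is needed here. First I would record nonemptiness: $\M(G)$ is in particular a compact $H$-flow, so by the usual Zorn's lemma argument it contains a minimal $H$-subflow, and hence $\mathrm{AP}_H(\M(G)) \neq \emptyset$.

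The crux is to show that $x\cdot g \in \mathrm{AP}_H(\M(G))$ whenever $x \in \mathrm{AP}_H(\M(G))$ and $g\in G$, and this is exactly where normality of $H$ in $G$ does all the work. Fix a minimal $H$-subflow $Y\ni x$ and consider its image $Y\cdot g$ under the right-translation map $\rho_g\colon z\mapsto z\cdot g$, which is a homeomorphism of $\M(G)$ (with inverse $\rho_{g^{-1}}$); thus $Y\cdot g$ is closed. The key computation uses $gH = Hg$: for $h\in H$, writing $gh = (ghg^{-1})g$ with $ghg^{-1}\in H$ gives $(y\cdot g)\cdot h = (y\cdot(ghg^{-1}))\cdot g \in Y\cdot g$, so $Y\cdot g$ is $H$-invariant. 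For minimality, the same rewriting shows $(y\cdot g)\cdot H = \rho_g(y\cdot H)$ for every $y\in Y$ (as $h$ ranges over $H$, so does $ghg^{-1}$, since conjugation by $g$ is an automorphism of $H$); taking closures and using that $\rho_g$ is a homeomorphism yields $\overline{(y\cdot g)\cdot H} = \rho_g(\overline{y\cdot H}) = \rho_g(Y) = Y\cdot g$, so every $H$-orbit in $Y\cdot g$ is dense. Hence $Y\cdot g$ is a minimal $H$-subflow containing $x\cdot g$, which gives $x\cdot g \in \mathrm{AP}_H(\M(G))$.

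Density then follows formally: $\mathrm{AP}_H(\M(G))$ is nonempty and $G$-invariant, so its closure is a nonempty closed $G$-invariant subset of $\M(G)$, i.e.\ a $G$-subflow; minimality of $\M(G)$ as a $G$-flow forces this closure to be all of $\M(G)$.

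The only genuinely nontrivial step is the second paragraph, and even there the entire content is the bookkeeping of the conjugation identity $gh = (ghg^{-1})g$. Once one commits to tracking how right-translation by $g$ interacts with the $H$-action, everything reduces to the single observation that $\rho_g$ carries minimal $H$-subflows to minimal $H$-subflows precisely because conjugation by $g$ is an automorphism of $H$. I do not expect any real obstacle beyond being careful that the twisting by this automorphism preserves, rather than merely permutes, the class of minimal $H$-orbits.
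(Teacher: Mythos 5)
Your proof is correct and takes essentially the same approach as the paper's: normality of $H$ is used to show that right translation by $g\in G$ carries minimal $H$-subflows to minimal $H$-subflows, and density then follows from minimality of $\M(G)$ as a $G$-flow. The paper writes the same conjugation bookkeeping set-theoretically ($XgH = XHg = Xg$ and $yHg^{-1} = yg^{-1}H$) where you use $gh = (ghg^{-1})g$ and the homeomorphism $\rho_g$; your version also makes explicit the nonemptiness and density steps that the paper leaves implicit.
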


\begin{proof}
	Suppose $X\subseteq \M(G)$ is a minimal $H$-subflow. Fix $g\in G$. Then $XgH = XHg = Xg$, so $Xg$ is an $H$-flow. Now suppose $y\in Xg$. Then $yHg^{-1} = yg^{-1}H\subseteq X$ is dense, so also $yH\subseteq Xg$ is dense, showing that $Xg$ is also a minimal $H$-subflow.
\end{proof}
\vspace{2 mm}

\begin{proof}[Proof of Theorem~\ref{Thm:MGMetrizable}]
	By Proposition~\ref{Prop:APClosed} and Lemma~\ref{Lem:H-APDense}, we must have $\mathrm{AP}_H(\M(G)) = \M(G)$, i.e.\ every point in $\M(G)$ belongs to a minimal $H$-subflow. Furthermore, by Proposition~\ref{Prop:APEquivClosed}, the relation $E$ defined by $E(x,y)$ iff $x\in \overline{y \cdot H}$ is a closed equivalence relation on $\M(G)$. Then $Y = \M(G)/E$ is a compact Hausdorff space and since the projection of the action of $G$ is $H$-invariant, $Y$ is a $K$-flow. This flow is minimal by minimality of the action of $G$ on $\M(G)$, hence it is metrizable and has cardinality $\mathfrak{c}$. Each equivalence class is a minimal $H$-flow, hence is metrizable and has cardinality $\mathfrak{c}$. This means that $\M(G)$ has cardinality at most $\mathfrak{c}$ and by~\cite{ZucThesis} Proposition $2.7.5$, if $\M(G)$ were non-metrizable, it would have cardinality $2^\mathfrak{c}$. 
	
	Furthermore, note that for every $y\in \M(K)$, the fiber $\pi^{-1}(\{y\})$ is an $H$-flow, giving us a map $\psi\colon Y\to \M(K)$. As $Y$ is minimal and $\M(K)$ is the universal minimal flow, we must have $\psi$ an isomorphism, i.e.\ each fiber $\pi^{-1}(\{y\})$ is a minimal $H$-flow.
\end{proof}
\vspace{2 mm}

We now turn towards the proof of Theorem~\ref{Thm:GUniqueErg}, so assume  $\M(H)$ and $\M(K)$ are metrizable and that both $H$ and $K$ are uniquely ergodic. The main idea of the proof is to apply the following measure disintegration theorem (see \cite{fur} Theorem $5.8$ and Proposition $5.9$).
\vspace{2 mm}

\begin{theorem}
	Let $X$, $Y$ be standard Borel spaces and $\phi\colon X \rightarrow Y$ a Borel map. Let $\mu\in \mathrm{P}(X)$ and $\nu = \phi_* \mu$, then there is a Borel map $y\mapsto \mu_y$ from $Y$ to $P(X)$ such that:
	\begin{itemize}
		\item[$i)$]$\mu_y(\phi^{-1}(\{y\})=1$
		\item[$ii)$]$\mu=\int \mu_y \mathrm{d}\nu(y)$.
	\end{itemize}
Moreover, if there is another such map $y\mapsto \mu_y'$, then for $\nu$-almost all $y$, $\mu_y=\mu_y'$.
\end{theorem}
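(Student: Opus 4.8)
The plan is to construct the disintegration from conditional expectations and then verify that the resulting family is genuinely a Borel family of probability measures concentrated on the fibers. First I would reduce to a convenient model: since $X$ is a standard Borel space, fix a Borel isomorphism with the Cantor space $2^{\N}$ (or a Borel subset thereof), so that we may assume $X$ carries a compatible compact, totally disconnected metric topology whose Borel structure is generated by the countable clopen algebra $\mathcal{A}$, closed under finite Boolean operations and containing $X$. All constructions transport back along the isomorphism, and the pushforward $\nu=\phi_*\mu$ is unaffected.

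Next I would set up conditional expectations relative to the sub-$\sigma$-algebra $\mathcal{G}=\phi^{-1}(\mathcal{B}_Y)$. For each $A\in\mathcal{A}$ the conditional expectation $E_\mu[\mathbf{1}_A\mid\mathcal{G}]$ is $\mathcal{G}$-measurable, hence by the Doob--Dynkin lemma factors as $f_A\circ\phi$ for a Borel $f_A\colon Y\to[0,1]$, characterized $\nu$-a.e.\ by $\int_B f_A\,d\nu=\mu(A\cap\phi^{-1}(B))$ for every Borel $B\subseteq Y$. Because $\mathcal{A}$ is countable, the identities $f_X=1$, $0\le f_A\le 1$, and finite additivity $f_{A\sqcup A'}=f_A+f_{A'}$ hold simultaneously for $y$ outside a single $\nu$-null set $N$. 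Thus for each $y\notin N$ the assignment $A\mapsto f_A(y)$ is a finitely additive probability premeasure on $\mathcal{A}$.

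The crux is to upgrade each finitely additive set function to a genuine Borel probability measure $\mu_y$, and to do so Borel-measurably in $y$. Working in the Cantor-space model pays off here: every decreasing sequence in the clopen algebra $\mathcal{A}$ with empty intersection is eventually empty by compactness, so finite additivity already entails countable additivity on $\mathcal{A}$, and Carath\'eodory extension yields a unique Borel probability measure $\mu_y$ for each $y\notin N$ (set $\mu_y$ to a fixed point mass for $y\in N$). Measurability of $y\mapsto\mu_y$ into $P(X)$ then follows because the Borel structure on $P(X)$ is generated by the evaluations $\mu\mapsto\mu(A)$ for $A\in\mathcal{A}$, and each $y\mapsto\mu_y(A)=f_A(y)$ is Borel. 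The extraction of a countably additive measure from finitely additive conditional data, carried out uniformly in $y$, is the main obstacle; the remaining steps are routine.

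Finally I would verify $(i)$ and $(ii)$ together with uniqueness. Property $(ii)$ holds on $\mathcal{A}$ by the defining equation for $f_A$ (take $B=Y$) and extends to all Borel sets by a monotone class argument. For $(i)$, fix a countable family $\{B_n\}$ of Borel subsets separating points of $Y$; the relation $f_{\phi^{-1}(B_n)}(y)=\mathbf{1}_{B_n}(y)$ holds $\nu$-a.e.\ for each $n$, and intersecting over $n$ shows $\mu_y$ concentrates on $\bigcap_n\{x:\mathbf{1}_{B_n}(\phi(x))=\mathbf{1}_{B_n}(y)\}=\phi^{-1}(\{y\})$ for $\nu$-a.e.\ $y$. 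For uniqueness, any second disintegration $y\mapsto\mu_y'$ furnishes, for each $A\in\mathcal{A}$, another version of $f_A$; these agree with $\mu_y(A)$ off a null set, and intersecting over the countable algebra $\mathcal{A}$ forces $\mu_y=\mu_y'$ for $\nu$-a.e.\ $y$.
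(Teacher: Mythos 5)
Your proposal is correct, but there is nothing in the paper to compare it against: the paper does not prove this statement, it quotes it as a known disintegration theorem with a citation to Furstenberg (\cite{fur}, Theorem 5.8 and Proposition 5.9) and immediately applies it with $X = \M(G)$, $Y = \M(K)$, $\phi = \pi$. What you have reconstructed is essentially the classical argument behind that citation. Your route---transport to a Cantor-space model, conditional expectations $f_A$ of indicators of clopen sets, one null set off which $A\mapsto f_A(y)$ is a finitely additive probability premeasure, compactness of clopen sets to upgrade to countable additivity, Carath\'eodory extension, and a countable separating family in $Y$ to localize $\mu_y$ on the fiber---is the standard proof. Furstenberg's own version runs the same idea through continuous functions rather than clopen sets: conditional expectations of a countable dense subset of $C(X)$ on a compact metric model, positivity and linearity off a single null set, then Riesz representation to produce $\mu_y$. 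The two are interchangeable; the clopen-algebra version trades Riesz representation for Carath\'eodory extension, which is arguably more elementary.

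Two points are glossed and deserve a line each in a careful write-up, though neither is a genuine gap. First, if $X$ is Borel isomorphic only to a proper Borel subset of $2^{\N}$, you should extend $\phi$ to a Borel map on all of $2^{\N}$ (constant off the copy of $X$) and then note that $\nu$-almost every $\mu_y$ concentrates on the copy of $X$, since it has full $\mu$-measure and property $(ii)$ applies. Second, the relation $\mu_y(\phi^{-1}(B_n)) = \mathbf{1}_{B_n}(y)$ $\nu$-a.e., which drives both $(i)$ and the uniqueness claim, requires the identity $\int_B \mu_y(D)\,\mathrm{d}\nu(y) = \mu\bigl(D\cap\phi^{-1}(B)\bigr)$ for \emph{all} Borel $D\subseteq X$ and $B\subseteq Y$; your monotone class argument only records the case $B=Y$, so you should run it for each fixed $B$ (both sides are measures in $D$ agreeing on $\mathcal{A}$ by the defining property of $f_A$). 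Likewise, uniqueness uses property $(i)$ of the competing family $\mu_y'$ to rewrite $\int \mu_y'\bigl(A\cap\phi^{-1}(B)\bigr)\,\mathrm{d}\nu$ as $\int_B \mu_y'(A)\,\mathrm{d}\nu$ before concluding that $y\mapsto\mu_y'(A)$ is another version of $f_A$. With these details filled in, the proof is complete.
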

\vspace{2 mm}

We apply the theorem with $X= \M(G)$, $Y = \M(G)/E = \M(K)$, and $\phi = \pi$. First note that $G$ is amenable (see the proof in Section \ref{Sec:Questions}), so let us take $\mu$ any $G$-invariant measure. Then $\nu = \phi_* \mu$ is $K$-invariant, hence it is the unique such measure. The following lemma gives us the uniqueness of the disintegration, hence the unique ergodicity of $G$ and the proof of Theorem~\ref{Thm:GUniqueErg}.
\vspace{2 mm}

\begin{lemma} 
	For $\nu$-almost all $y\in Y$, $\mu_y$ is $H$-invariant.
\end{lemma}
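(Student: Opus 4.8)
The plan is to establish $H$-invariance of the disintegration measures $\mu_y$ by exploiting the uniqueness clause of the disintegration theorem together with the $G$-invariance of $\mu$. The key observation is that each fiber $\pi^{-1}(\{y\})$ is a minimal $H$-flow by Theorem~\ref{Thm:MGMetrizable}, and that the group $G$ acts on the fibers in a way compatible with the $K$-action on the base $Y = \M(K)$. Specifically, for $g\in G$, the right action by $g$ sends the fiber over $y$ to the fiber over $y\cdot\pi(g)$, since $\pi$ is a $G$-map.

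First I would unwind how $G$ acts on the disintegration. Since $\mu$ is $G$-invariant, for any $g\in G$ we have $\mu = \mu\cdot g = \int \mu_y\cdot g\,\mathrm{d}\nu(y)$. I would check that the family $y\mapsto \mu_{y\cdot\pi(g)}\cdot g^{-1}$ (suitably interpreted, pushing the measure on the fiber over $y\cdot\pi(g)$ back to the fiber over $y$ via the right action of $g^{-1}$) is itself a valid disintegration of $\mu$ over $\nu$: property $i)$ holds because $g^{-1}$ carries $\pi^{-1}(\{y\cdot\pi(g)\})$ to $\pi^{-1}(\{y\})$, and property $ii)$ follows from the $G$-invariance of both $\mu$ and $\nu$ by a change-of-variables computation. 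By the almost-everywhere uniqueness of the disintegration, for each fixed $g$ we then get $\mu_{y\cdot\pi(g)}\cdot g^{-1} = \mu_y$ for $\nu$-almost all $y$, equivalently $\mu_y\cdot g = \mu_{y\cdot\pi(g)}$ a.e.

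The crucial case is $g\in H$, where $\pi(g) = 1_K$ fixes every point of $Y$, so the relation collapses to $\mu_y\cdot g = \mu_y$ for $\nu$-almost all $y$. This says each $\mu_y$ is invariant under the single element $g$; to upgrade this to full $H$-invariance I would invoke separability. Since $H$ is Polish, fix a countable dense subgroup or countable dense set $D\subseteq H$; the exceptional null set can depend on $g$, so I intersect over the countably many $g\in D$ to obtain a single $\nu$-conull set on which $\mu_y$ is $D$-invariant. Because the action $X\times H\to X$ is continuous and $D$ is dense in $H$, invariance under $D$ extends to invariance under all of $H$ by a standard weak-$*$ continuity argument, giving $H$-invariance of $\mu_y$ for $\nu$-almost all $y$.

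The main obstacle I anticipate is the measurability and Borel-cocycle bookkeeping in the change-of-variables step: one must verify carefully that $y\mapsto \mu_{y\cdot\pi(g)}\cdot g^{-1}$ is a genuine Borel map into $\mathrm{P}(X)$ and that its integral against $\nu$ reproduces $\mu$, which requires that the $K$-invariance of $\nu$ interacts correctly with the right $G$-action on the fibers. Once the disintegration is recognized as $G$-equivariant in this sense, the uniqueness clause does the real work essentially for free, and the reduction to a countable dense subgroup to pass from per-element invariance to full $H$-invariance is routine. Combined with the unique ergodicity of $H$ acting on each fiber $\pi^{-1}(\{y\})\cong \M(H)$, this pins down $\mu_y$ uniquely a.e., so $\mu$ itself is the unique $G$-invariant measure.
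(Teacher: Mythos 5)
Your proposal is correct and follows essentially the same route as the paper's proof: apply the almost-everywhere uniqueness of the disintegration to the translated family $y\mapsto \mu_y\cdot h$ (which collapses to a disintegration of $\mu$ over $\nu$ precisely because $\pi(h)=1_K$ fixes the base), then intersect the null sets over a countable dense subset of $H$ and use closedness of the stabilizer $\{h\in H: \mu_y\cdot h = \mu_y\}$ to upgrade to full $H$-invariance. Your detour through general $g\in G$ and the equivariance relation $\mu_y\cdot g = \mu_{y\cdot\pi(g)}$ is a harmless generalization of what the paper does implicitly; the paper simply states the $h\in H$ case directly.
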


\begin{proof}
	We remark that by the uniqueness of the decomposition, it is easy to establish that for any countable set $(h_n)_{n\in \N}$ of elements of $H$, $\nu$-almost surely $\mu_y$ is $(h_n)_{n\in \N}$-invariant for all $n\in \N$. Since $H$ is Polish, we can assume that $(h_n)_{n\in \N}$ is dense in $H$. Since the set of $h\in H$ such that $h\cdot \mu_y = \mu_y$ is closed, it follows that for any $y\in Y$ with $\mu_y$ $(h_n)_{n\in \N}$-invariant, we in fact have that $\mu_y$ is $H$-invariant.
\end{proof}
\vspace{2 mm}

\section{Examples}
\label{Sec:Examples}

In this section, we give several examples of short exact sequences appearing in the realm of non-archimedian Polish groups. The main application of Theorems~\ref{Thm:MGMetrizable} and \ref{Thm:GUniqueErg} occurs in Subsection~\ref{Sec:Wreath}, where we discuss wreath products. Subsection~\ref{Sec:Torsion} describes instances of more general Polish group extensions, where the main theorems don't apply as clearly.

\subsection{Wreath products}
\label{Sec:Wreath}
The simplest (non trivial) setting in which short exact sequences appear is the one where we have $H$ a Polish group, $K$ is a Polish group that acts on a countable set $X$ and $G=H^X\rtimes K$. The product is defined as:
\begin{equation*}
((h_a)_{a\in X},\sigma)\cdot((g_a)_{a\in X},\tau) = ((h_a g_{\sigma(a)})_{a\in X},\sigma \tau)
\end{equation*}
This is a short exact sequence where $H^X$ is the normal subgroup and $K$ is the quotient.

We apply the main theorems to prove the following.
\vspace{2 mm}

\begin{theorem}\label{Thm:WreathProducts}
	Letting $G = H^X\rtimes K$, if $\M(H)$ and $\M(K)$ are metrizable, then so is $\M(G)$. Under those hypotheses, if $H$ and $K$ are uniquely ergodic, then so is $G$.
\end{theorem}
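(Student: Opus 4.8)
The plan is to apply Theorems~\ref{Thm:MGMetrizable} and~\ref{Thm:GUniqueErg} directly to the short exact sequence $1\to H^X\to G\to K\to 1$. Since $\M(K)$ is metrizable and $K$ is uniquely ergodic by hypothesis, everything reduces to establishing the two corresponding facts for the base group $N:=H^X$: that $\M(N)$ is metrizable and that $N$ is uniquely ergodic. This is where the real work lies, since the main theorems concern two-term sequences, whereas $N$ is itself an (in general infinite) product that must be analyzed on its own before it can be fed into those theorems. Throughout I identify $X$ with $\N$ when it is infinite; when $X$ is finite the product arguments below collapse to their base cases.

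First I would pin down $\M(N)$ explicitly. By the combination of \cite{MNT} and \cite{BYMT} recalled above, metrizability of $\M(H)$ furnishes an extremely amenable, co-precompact closed subgroup $H^*\subseteq H$ with $\M(H)\cong\widehat{H^*\backslash H}$. I claim $(H^*)^X\subseteq H^X$ is again extremely amenable and co-precompact. Extreme amenability of the product follows from two facts: every finite sub-product $\prod_{a\in F}H^*$ is extremely amenable (a routine induction, using that the fixed-point set of one factor in any flow is compact and invariant under the remaining, commuting factors), and the finitely supported elements are dense in $(H^*)^X$. A compactness argument over the directed family of finite sub-products then produces a point fixed by all of them, hence by the dense finitely supported subgroup, hence by all of $(H^*)^X$. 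Co-precompactness is immediate since $(H^*)^X\backslash H^X\cong(H^*\backslash H)^X$ is a product of precompact spaces. Invoking \cite{MNT} in the converse direction, I conclude $\M(N)\cong\widehat{(H^*)^X\backslash H^X}\cong\widehat{(H^*\backslash H)^X}\cong(\widehat{H^*\backslash H})^X\cong\M(H)^X$, using that completion commutes with countable products. As a countable product of compact metrizable spaces, $\M(N)$ is compact metrizable.

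It remains to show $N$ is uniquely ergodic, i.e.\ that the flow $\M(N)\cong\M(H)^X$ carries a unique $N$-invariant measure. For finite products I would argue by induction: applying Theorem~\ref{Thm:GUniqueErg} to $1\to H\to H^{n}\to H^{n-1}\to 1$ shows that if $H^{n-1}$ is uniquely ergodic with metrizable universal minimal flow, then so is $H^{n}$, and its unique invariant measure is necessarily the product measure $\mu_H^{\otimes n}$, where $\mu_H$ is the unique $H$-invariant measure on $\M(H)$. For the infinite product, let $\mu$ be any $N$-invariant measure on $\M(H)^{\N}$. The coordinate projection $\M(H)^{\N}\to\M(H)^{n}=\M(H^{n})$ is equivariant over $H^X\to H^{n}$, so it pushes $\mu$ forward to an $H^{n}$-invariant measure, which must be $\mu_H^{\otimes n}$. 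Since the finite-dimensional marginals of a Borel probability measure on a countable product determine it, $\mu=\mu_H^{\otimes\N}$ is uniquely determined, so $N$ is uniquely ergodic. Feeding $\M(N)$ metrizable, $\M(K)$ metrizable, $N$ uniquely ergodic, and $K$ uniquely ergodic into Theorems~\ref{Thm:MGMetrizable} and~\ref{Thm:GUniqueErg} then yields both conclusions for $G$.

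The main obstacle is thus not the short exact sequence itself—to which the main theorems apply verbatim—but the preliminary analysis of the product $H^X$. Identifying its universal minimal flow as $\M(H)^X$ (so that metrizability is manifest and the marginal computation above is available) is the step demanding the most care, and it is precisely here that the extreme amenability of $(H^*)^X$ and the commutation of completion with countable products are essential.
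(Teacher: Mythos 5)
Your proposal is correct and takes essentially the same route as the paper: the paper likewise reduces, via Theorems~\ref{Thm:MGMetrizable} and~\ref{Thm:GUniqueErg}, to showing that $\M(H^X)$ is metrizable and $H^X$ is uniquely ergodic, and proves this countable-product lemma exactly as you do --- extremely amenable, closed, co-precompact subgroups from \cite{MNT} and \cite{BYMT}, extreme amenability and co-precompactness of $(H^*)^X$, identification of $\M(H^X)$ with $\widehat{(H^*)^X\backslash H^X}\cong \M(H)^X$, and determination of any invariant measure by its finite-dimensional marginals. If anything, you give more detail than the paper's Lemma~\ref{Lem:CountableProd}, which asserts without proof both the extreme amenability of infinite products and the fact that the finite marginals must equal the product measures $\mu_H^{\otimes n}$ (your induction via Theorem~\ref{Thm:GUniqueErg} applied to $1\to H\to H^{n}\to H^{n-1}\to 1$ supplies the justification the paper leaves implicit).
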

\vspace{2 mm}

The proof relies on the following lemma.
\vspace{2 mm}

\begin{lemma}\label{Lem:CountableProd}
Let $(G_i)_{i\in \N}$ be a family of groups such that $\M(G_i)$ is metrizable for all $i\in \N$, and set $G=\prod_{i\in \N} G_i$, then $\M(G)$ is metrizable. Moreover, if $G_i$ is uniquely ergodic for all $i\in \N$, then so is $G$.
\end{lemma}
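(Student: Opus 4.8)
The plan is to give an explicit model for the universal minimal flow of any countable product: I claim that if $\M(G_i)$ is metrizable for each $i$, then $\M\big(\prod_i G_i\big)\cong\prod_i\M(G_i)$, with $\prod_i G_i$ acting coordinatewise. This identification applies equally to finite and infinite products, and it yields metrizability at once, since a countable product of compact metrizable spaces is compact metrizable. It is worth stressing why Theorem~\ref{Thm:MGMetrizable} does not give this directly in the infinite case: in the sequence $1\to\prod_{i\ge n}G_i\to G\to\prod_{i<n}G_i\to 1$ the kernel is again an infinite product, so its universal minimal flow is exactly the object under construction and the induction is circular. The finite products $G^{(n)}:=\prod_{i<n}G_i$, by contrast, do have metrizable universal minimal flows by iterating Theorem~\ref{Thm:MGMetrizable} along $1\to G_{n-1}\to G^{(n)}\to G^{(n-1)}\to 1$.

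To obtain the identification I would use the Melleray--Nguyen Van Th\'e--Tsankov description recalled just before Lemma~\ref{Lem:MetricProperty}. Fix for each $i$ a co-precompact extremely amenable subgroup $G_i^*\subseteq G_i$ with $\M(G_i)\cong\widehat{G_i^*\backslash G_i}$, and set $G^*:=\prod_i G_i^*$, a closed subgroup of $G:=\prod_i G_i$. First I would verify that $G^*$ is extremely amenable: the finitely supported elements $\bigcup_n\big(\prod_{i<n}G_i^*\big)$ form a dense increasing union of finite products of extremely amenable groups; finite products of extremely amenable groups are extremely amenable by the standard commuting-fixed-point argument, an increasing union of extremely amenable subgroups is extremely amenable (intersect the descending family of nonempty compact fixed-point sets), and extreme amenability passes from a dense subgroup to the whole group. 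Next I would verify co-precompactness through the uniform isomorphism $G^*\backslash G\cong\prod_i\big(G_i^*\backslash G_i\big)$; as a product of precompact uniform spaces is precompact and uniform completion commutes with products, this gives $\widehat{G^*\backslash G}\cong\prod_i\widehat{G_i^*\backslash G_i}\cong\prod_i\M(G_i)$, so that $\M(G)\cong\prod_i\M(G_i)$ is metrizable.

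For unique ergodicity I would work inside the model $\M(G)\cong\prod_i\M(G_i)$ and show that the product measure $\mu:=\prod_i\mu_i$, with $\mu_i$ the unique $G_i$-invariant measure on $\M(G_i)$, is the unique $G$-invariant measure. Existence is immediate from coordinatewise invariance. For uniqueness, let $\nu$ be any $G$-invariant Borel probability measure, and let $p_n$ denote the projection onto $\prod_{i<n}\M(G_i)\cong\M(G^{(n)})$. Since $p_n$ is equivariant for the $G$-action on $\M(G^{(n)})$ factoring through the surjection $G\to G^{(n)}$, the pushforward $(p_n)_*\nu$ is $G^{(n)}$-invariant; iterating Theorem~\ref{Thm:GUniqueErg} shows $G^{(n)}$ is uniquely ergodic, so $(p_n)_*\nu=\prod_{i<n}\mu_i$. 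Thus $\nu$ and $\mu$ share all finite-dimensional marginals, and as the finite cylinders form a $\pi$-system generating the Borel $\sigma$-algebra of the product, $\nu=\mu$.

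The main obstacle is the infinite-product step, concentrated in the verification that $G^*$ is simultaneously extremely amenable and co-precompact with $\widehat{G^*\backslash G}\cong\prod_i\M(G_i)$; this is precisely where the circularity of a naive application of Theorem~\ref{Thm:MGMetrizable} is sidestepped. It relies on the standard-but-nontrivial permanence of extreme amenability under finite products, dense subgroups and increasing unions, together with the compatibility of coset uniformities and uniform completions with countable products. Once the identification $\M(G)\cong\prod_i\M(G_i)$ is in hand, the remaining measure-theoretic argument is routine.
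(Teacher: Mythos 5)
Your proposal is correct and takes essentially the same route as the paper's proof: both invoke the Melleray--Nguyen Van Th\'e--Tsankov/Ben Yaacov--Melleray--Tsankov subgroups $G_i^*$, form the closed, extremely amenable, co-precompact subgroup $G^*=\prod_i G_i^*$, identify $\M(G)\cong\widehat{G^*\backslash G}\cong\prod_i \M(G_i)$, and then determine any $G$-invariant measure by forcing its finite-dimensional marginals to equal $\mu_0\otimes\cdots\otimes\mu_{n-1}$. The only differences are in the level of detail: you spell out the proof that $G^*$ is extremely amenable (the paper simply cites closure of extreme amenability under arbitrary products), and you make explicit the iteration of Theorem~\ref{Thm:GUniqueErg} for finite products and the $\pi$-system step, both of which the paper leaves implicit.
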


\begin{proof}
	Using Fact~\ref{Fact:MNTandBYMT}, we know that there exists a sequence $(G_i^*)_{i\in \N}$ such that $G_i^*$ is an extremely amenable, closed, co-precompact subgroup of $G_i$.
	
	Let us consider $G^* = \prod_{i \in \N} G^*_i$ as a subgroup of $G$. It is a closed subgroup and is extremely amenable, as the property of being extremely amenable is closed under arbitrary (not just countable) products. 

	The observation $\widehat{G^* \backslash G} = \prod_{i\in \N}\widehat{ G_i^* \backslash G_i}$ gives the co-precompactness of $G^*$. Hence $\M(G)=\widehat{G^* \backslash G}$ and is metrizable.

	This also implies unique ergodicity of $G$, for let $\mu_i$ be the unique $G_i$-invariant measure on $\M(G_i)$ and $\mu$ any $G$-invariant measure on $\M(G)$. The pushfoward of $\mu$ on $\prod_{i < n} \M(G_i)$ has to be equal to $\mu_0\otimes \cdots \otimes \mu_{n-1}$ for all $n\in \N$, hence $\mu$ is uniquely determined on the basic open set of the topology of $\M(G)$ and is therefore uniquely determined.
\end{proof}
\vspace{2 mm}

\begin{proof}[Proof of Theorem \ref{Thm:WreathProducts}]
	By Theorems~\ref{Thm:MGMetrizable} and \ref{Thm:GUniqueErg}, it is enough to show that if $\M(H)$ is metrizable (and uniquely ergodic), then so is $\M(H^X)$. Lemma \ref{Lem:CountableProd} gives us exactly that.
\end{proof}
\vspace{2 mm}

Note that this result was already proven by Pawliuk and Sokic (\cite{PS} Theorem 2.1) and Sokic (\cite{Sok} Proposition 5.2) in the case where $H$ and $K$ are automorphism groups of \fr limits.
\vspace{2 mm}

\subsection{Beyond semi-direct products}
\label{Sec:Torsion}
We now consider group extensions which are not semidirect products. This subsection does not contain any particular applications of the main theorem, but is included to give some more understanding of how diverse short exact sequences of Polish groups can be. The reason that applying the main theorems is difficult here is that often, the closed subgroup $H$ is equally difficult to work with as $G$ itself.

To show that certain group extensions are not semi-direct products, we briefly discuss some of the properties of those group extensions that are. Let $H$ and $K$ be topological groups, and suppose we are given a homomorphism $\phi\colon K\to \aut{H}$. We will write $\phi(k)$ as $\phi_k$ to simplify notation. Then we can endow $H\times K$ with a group operation, where we define $(h_0, k_0)\cdot (h_1, k_1) = (h_0\phi_{k_0}(h_1), k_0k_1)$. Now suppose $\phi$ has the property that whenever $k_i\to k\in K$ and $h_i\to h\in H$, we have $\phi_{k_i}(h_i)\to \phi_k(h)\in H$. For example, when $H$ is locally compact, this property says that $\phi$ is continuous when $\aut{H}$ is given the compact-open topology. In this case, $H\times K$ endowed with the above operation is a topological group, and we denote this by $H\rtimes^\phi K$, or $H\rtimes K$ if $\phi$ is understood. Setting $G = H\rtimes K$, we identify $H$ with the closed normal subgroup $\{(h, 1_K): h\in H\}$, and the quotient $G/H$ is isomorphic to $K$, showing that $G$ is an extension of $K$ by $H$.

If $1\to H\to G\xrightarrow{\pi} K\to 1$ is a short exact sequence of topological groups, we say that the sequence \emph{splits continuously} if there is a continuous homomorphism $\alpha\colon K\to G$ so that $\pi\circ \alpha = \mathrm{id}_K$, the identity map on $K$. Such an $\alpha$ will always have closed image. When $G = H\rtimes K$, one can define $\alpha(k) = (1_H, k)$. Conversely, if $\alpha\colon K\to G$ is a continuous homomorphism with closed image and $\pi\circ \alpha = \mathrm{id}_K$, then we obtain a homomorphism $\phi\colon K\to \aut{H}$ given by $\phi_k(h) = \alpha(k)h\alpha(k^{-1})$. Then $\phi$ satisfies the required continuity property, and we have $G\cong H\rtimes^\phi K$.

\subsubsection{Ordered homogeneous metric space with distances $1$, $3$ and $4$.}

We first consider the countable homogeneous metric space with distances $1$, $3$ and $4$, which we denote by $\mathbb{F}$. This is the \fr limit of those metric spaces with distances belonging to the set $\{1, 3, 4\}$.  In $\mathbb{F}$, there are infinitely many infinite equivalence classes of points at distance $1$ and such that the distance between two non-equivalent points is $3$ or $4$ at random.

We now consider an extension $\mathbb{F}^*$ of this structure where on each equivalence class we generically put a dense linear ordering, and we leave points between different classes unordered. We set $G = \aut{\mathbb{F}^*}$.

Letting $H$ be the subgroup that stabilizes every class set-wise, then $H\subseteq G$ is a closed normal subgroup. Moreover, it is easy to prove via a back and forth that the quotient $H \backslash G$ is isomorphic to $\mathrm{S}_\infty$.

This extension cannot be a semi-direct product. To prove this we consider the element $\sigma$ of $S_\infty$ that swaps $0$ and $1$, leaving all other points fixed. We have $\sigma^2 =\mathrm{id}_\N$. Suppose now that $G$ is indeed a semidirect product; letting $\alpha\colon S_\infty\to G$ be a continuous homomorphism with $\pi\circ\alpha = \mathrm{id}_{S_\infty}$, then the element $g^* = \alpha(\sigma)$ has order $2$ in $G$ and permutes two equivalence classes, say $A$ and $B$. If we look at the action of $g^*$ on a class $C$ which $g^*$ fixes set-wise, then $g^*$ defines an automorphism of $(\mathbb{Q},<)$ of order $2$, thus it acts trivially on $C$. Now given $x\in A$, let $y = g^*(x)\in B$. Then using the homogeneity of $\mathbb{F}^*$, we can find $z\in C$ with $d(x, z) = 3$ and $d(y, z) = 4$. Therefore we must have $g^*(z) \neq z$, a contradiction.

\subsubsection{The switching group} 

We now consider the structure $\mathbb{F}$ formed by first considering the Rado graph $\mathbb{H}=(V,E)$. The structure $\mathbb{F}$ has domain $V$ and the language only has one $4$-ary relation symbol $R$ with the following condition:
\begin{equation*}
R^\mathbb{F}(x,y,w,z) \Leftrightarrow \left((E^\mathbb{H}(x,y) \wedge E^\mathbb{H}(w,z))\vee (\lnot E^\mathbb{H}(x,y) \wedge \lnot E^\mathbb{H}(w,z) \right)
\end{equation*}
where $x,y,w,z$ are vertices. We obviously have $\aut{\mathbb{H}}\vartriangleleft \aut{\mathbb{F}}$. The quotient is $\mathbb{Z}/2\mathbb{Z}$.

Again, this is not a semi-direct product, otherwise we would have $f$ an involution of the vertices such that $E(x,y) \Leftrightarrow \lnot E (f(x),f(y))$, which is impossible because we would have $E(x,f(x))$, $\lnot E((f(x),f^2(x))$, and $f^2(x) = x$.

\subsubsection{Partitioned $(\mathbb{Q},<)$}
We partition $\mathbb{Q}$ in dense codense classes that we name $(E_i)_{i \in \N}$. We define an equivalence relation $E$ on $\mathbb{Q}$:
\begin{equation*}
E(x,y) \Leftrightarrow \exists i\in \N \ : \ (x,y)\in E_i.
\end{equation*}
We let $G$ be the subgroup of $\aut{\mathbb{Q},<}$ fixing the equivalence relation $E$, and we let $H$ be the subgroup of $G$ that fixes each $E_i$ setwise; it is normal in $G$.

Again, a torsion argument allows us to prove this is not a semi direct product.

\section{Combinatorial proof of Theorem~\ref{Thm:MGMetrizable}}	
\label{Sec:ComboProof}

This section provides a combinatorial proof of a weakening of Theorem~\ref{Thm:MGMetrizable}; this proof does not show that each fiber is a minimal $H$-flow. The advantage of this proof is that it is ``quantitative'' in a sense that will be made precise. We will first reprove the theorem in the case that $G$ is non-Archimedean, and then discuss the general case.

\subsection{The non-Archimedean case}
\label{SubSec:NonArch}

We first assume that $G$, hence also $H$ and $K$, are non-Archimedean. So fix $\{U_n: n < \omega\}$ a base at $1_G$ of clopen subgroups. Then $\{H\cap U_n: n < \omega\}$ and $\{\pi[U_n]: n < \omega\}$ are bases of clopen subgroups at the identity in $H$ and $K$, respectively. For instance, if $G = \aut{\mathbf{K}}$ for some \fr structure $\mathbf{K} = \flim{\mathcal{K}}$, and we write $\mathbf{K} = \bigcup_n \mathbf{A}_n$ as an increasing union of finite substructures, then we can let $U_n$ be the pointwise stabilizer of $\mathbf{A}_n$. We will need the following definition, which we translate from the \fr setting to the group setting.

We consider the left coset space $G_n := G/U_n$, which is countable. When $U_n = \mathrm{Stab}(\mathbf{A}_n)$, then $G_n$ can be identified with the set $\emb{\mathbf{A}_n, \mathbf{K}}$. The group $G$ acts on $G_n$ on the left in the natural way. If $X$ is a compact space, then $X^{G_n}$ becomes a right $G$-flow, where for $\gamma\in X^{G_n}$ and $g_0\in G$, we define $\gamma\cdot g_0$ via $\gamma\cdot g_0(g_1U_n) := \gamma(g_0g_1U_n)$.
\vspace{2 mm}

\begin{defin}
	\label{Def:RamseyDegree}
	Fix $n, m < \omega$. We say that the clopen subgroup $U_n\subseteq G$ has \emph{Ramsey degree} $m < \omega$ if the following both hold.
	\begin{enumerate}
		\item 
		For any $r< \omega$ and any coloring $\gamma\colon G_n\to r$, there is some $\delta\in \overline{\gamma\cdot G}$ and some $F\subseteq r$ with $\delta\in F^{G_n}$ and $|F|\leq m$. Equivalently, for any $\gamma$ as above, there is $p\in \S(G)$ with $|\im{\gamma\cdot p}|\leq m$.
		\item 
		There is a surjective coloring $\gamma: G_n\to m$ so that $\overline{\gamma\cdot G}$ is a minimal $G$-flow. We often call such colorings \emph{minimal}, or \emph{$G$-minimal} to emphasize the group.
	\end{enumerate}
\end{defin}
\vspace{2 mm}

When $U_n = \mathrm{Stab}(\mathbf{A}_n)$, then Definition~\ref{Def:RamseyDegree} coincides with the Ramsey degree (for embeddings) of $\mathbf{A}_n\in \mathcal{K}$. We then have the following theorem.
\vspace{2 mm}

\begin{theorem}[\cite{ZucMetr}]
	\label{Thm:RamseyDegMetr}
	$\M(G)$ is metrizable iff for every $n< \omega$, the subgroup $U_n$ has finite Ramsey degree.
\end{theorem}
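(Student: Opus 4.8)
The plan is to prove both implications by relating the abstract flow $\M(G)$ to the concrete ``finite-alphabet'' flows $r^{G_n}$, tracking a single generic point through all of them. Throughout I fix a minimal right ideal $M=\M(G)\subseteq\S(G)$ and a minimal idempotent $u\in M$. For each $n$ let $\iota_n\in(\beta G_n)^{G_n}$ be the tautological point $\iota_n(gU_n)=gU_n$ (the principal ultrafilter), and let $M_n=\overline{(\iota_n\cdot u)\cdot G}$ be the image of $\M(G)$ under the $G$-map $p\mapsto\iota_n\cdot p$. Since $\{U_n\}$ is a neighbourhood basis at $1_G$, the assignment $p\mapsto(\iota_n\cdot p)_n$ embeds $\M(G)$ into $\prod_n M_n$, so it suffices to understand each $M_n$. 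I will also use the elementary observation that on any minimal subflow $N\subseteq r^{G_n}$ the ``palette'' $\im{\delta}=\{\delta(gU_n):g\in G\}\subseteq r$ is independent of $\delta\in N$: the condition $i\in\im{\delta}$ is clopen and $G$-invariant, so by minimality it holds either everywhere or nowhere on $N$.

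For the implication ($\Leftarrow$), assume each $U_n$ has finite Ramsey degree $m_n$; I claim that clause (1) of Definition~\ref{Def:RamseyDegree} alone forces $M_n$ to be metrizable. Write $Y=\overline{\{(\iota_n\cdot u)(gU_n):g\in G\}}\subseteq\beta G_n$ for the closure of the palette of the generic point $\iota_n\cdot u$. The key step is to show $|Y|\le m_n$. If instead $Y$ contained $m_n+1$ distinct ultrafilters, I would separate them by a finite partition, producing a colouring $\gamma\colon G_n\to r$ whose continuous extension $\bar\gamma\colon\beta G_n\to r$ takes $m_n+1$ distinct values on $Y$. Since $\bar\gamma((\iota_n\cdot u)(gU_n))=(\gamma\cdot u)(gU_n)$ by continuity, this would give $|\im{\gamma\cdot u}|\ge m_n+1$; but $\gamma\cdot u$ lies in the minimal subflow $\overline{(\gamma\cdot u)\cdot G}$, on which the palette is constant of this same size, so no $p\in\S(G)$ could reduce $|\im{(\gamma\cdot u)\cdot p}|$ below $m_n+1$, contradicting clause (1) applied to the colouring $\gamma\cdot u$. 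Hence $|Y|\le m_n$. Because the $G$-action permutes the coordinates of $(\beta G_n)^{G_n}$ transitively, every coordinate of $M_n$ has the same closed palette $Y$, whence $M_n\subseteq Y^{G_n}$, a countable product of a finite set, so $M_n$ is metrizable and therefore $\M(G)\hookrightarrow\prod_n M_n$ is metrizable.

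For the implication ($\Rightarrow$), assume $\M(G)$ is metrizable and invoke the results of \cite{MNT} and \cite{BYMT} recalled above: there is a closed, extremely amenable, co-precompact subgroup $G^*\subseteq G$ with $\M(G)\cong\widehat{G^*\backslash G}$. Co-precompactness means exactly that $G^*$ has only finitely many orbits on each coset space $G_n=G/U_n$; call this number $m_n$. Then clause (1) is immediate: given any $\gamma\colon G_n\to r$, extreme amenability of $G^*$ yields a $G^*$-fixed point $\delta\in\overline{\gamma\cdot G}$, which is constant on each of the $m_n$ orbits, so $|\im{\delta}|\le m_n$, and $\delta=\gamma\cdot p$ for some $p\in\S(G)$. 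For clause (2) I would identify the Ramsey degree as the \emph{maximum} size of a palette occurring among minimal subflows $N\subseteq r^{G_n}$ (over all $r$): a $G^*$-fixed point of such an $N$ is again constant on the $m_n$ orbits, so this maximum is at most $m_n$, hence finite and attained, and relabelling the palette of a maximizing $N$ produces a surjective $G$-minimal colouring.

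The main obstacle is the identification of the Ramsey degree with a single invariant that certifies both clauses of Definition~\ref{Def:RamseyDegree} at once: the constant colouring already witnesses a minimal colouring onto one colour, so clause (2) in isolation is weak, and the real content is that the \emph{largest} palette among minimal subflows is simultaneously finite and an upper bound for the reduction in clause (1). In the metrizable regime this largest palette is precisely $|Y|$ from the first half, and the crux tying the two directions together is that metrizability of $\M(G)$ is faithfully detected coordinatewise — i.e.\ that the generic point $\iota_n\cdot u$ has a \emph{finite} palette. This is exactly what fails, via the Barto\v sov\'a--Zucker phenomenon alluded to after Proposition~\ref{Prop:APClosed}, as soon as $\M(G)$ is non-metrizable.
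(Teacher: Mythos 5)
First, a point of orientation: the paper does not actually prove Theorem~\ref{Thm:RamseyDegMetr} — it cites it from \cite{ZucMetr} — and the closest in-text argument is the proof of its metric analogue, Theorem~\ref{Thm:MGMetrGeneral}. Measured against that, your proof is correct but takes a genuinely different route in the direction ``finite Ramsey degrees $\Rightarrow$ metrizable''. The paper argues this direction by contrapositive: if $\M(G)$ is non-metrizable, it mimics Lemma 2.5 of \cite{BYMT} to produce a constant $c>0$ and, for every $N$, a minimal point whose range needs at least $N$ balls of radius $c$, so no finite degree exists. You argue directly: using only clause (1) of Definition~\ref{Def:RamseyDegree}, you show the closed palette $Y$ of the generic point $\iota_n\cdot u$ has at most $m_n$ elements, so $M_n\subseteq Y^{G_n}$ is metrizable, and $\M(G)$ embeds in $\prod_n M_n$ via $\S(G)=\varprojlim_n\beta G_n$. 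This is essentially the strategy of \cite{ZucMetr} itself, and it buys the explicit representation that the paper later quotes in Lemma~\ref{Lem:MGRepresentation}: your $Y$ is exactly $\pi_n[M]$ there, and your bound recovers $|\pi_n[M]|\leq m_G(n)$. What the paper's contrapositive buys is generality: it survives the passage from clopen subgroups to arbitrary Polish $G$, where colourings must be replaced by Lipschitz functions. In the direction ``metrizable $\Rightarrow$ finite degrees'' you and the paper use the same input from \cite{MNT} and \cite{BYMT}; your fixed-point/orbit-counting argument is the natural non-Archimedean specialization of the paper's Lipschitz-ball argument. (Minor slip: the set $\{\delta\in N: i\in\im{\delta}\}$ is open and $G$-invariant, not clopen; openness plus minimality still gives your palette-constancy observation.)

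One loose end you flag but should actually close: you verify clause (1) with the bound $m_n$ (the number of $G^*$-orbits on $G_n$) and clause (2) with $D_n$ (the maximal palette among minimal subflows), while Definition~\ref{Def:RamseyDegree} requires a single $m$ witnessing both. The missing step is a one-liner from facts you already have: given any colouring $\gamma\colon G_n\to r$, its orbit closure $\overline{\gamma\cdot G}$ contains a minimal subflow, whose constant palette has size at most $D_n$ by definition of $D_n$; any point of that subflow is of the form $\gamma\cdot p$, so clause (1) also holds with bound $D_n$. Hence both clauses hold with $m=D_n\leq m_n<\omega$, and the Ramsey degree of $U_n$ is exactly $D_n$. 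With that sentence added, your proof is complete.
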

\vspace{2 mm}

By Theorem~\ref{Thm:RamseyDegMetr} and our assumption that $\M(H)$ and $\M(K)$ are metrizable, we know that for every $n< \omega$, the subgroups $H\cap U_n\subseteq H$ and $\pi[U_n]\subseteq K$ have finite Ramsey degrees $m_H$ and $m_K$, respectively. We will use these to bound the Ramsey degree of $U_n$. The following proposition will prove Theorem~\ref{Thm:MGMetrizable} in the non-Archimedean case.
\vspace{2 mm}

\begin{prop}
	\label{Prop:RamseyDegBound}
	With $m_H$ and $m_K$ as above, the Ramsey degree $m_G$ of $U_n$ satisfies $m_G\leq m_H\cdot m_K$.
\end{prop}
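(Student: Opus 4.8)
The plan is to verify condition (1) of Definition~\ref{Def:RamseyDegree} with the bound $m_H\cdot m_K$: given any coloring $\gamma\colon G_n\to r$, I will produce $p\in\S(G)$ with $|\im{\gamma\cdot p}|\le m_H\cdot m_K$, which forces $m_G\le m_H\cdot m_K$. The geometric picture organizing everything is that the natural surjection $\pi_*\colon G_n\to K_n:=K/\pi[U_n]$, $gU_n\mapsto \pi(g)\pi[U_n]$, is $G$-equivariant (with $G$ acting on $K_n$ through $\pi$), and that each fiber $F_c=\pi_*^{-1}(c)$ is a single $H$-orbit. Indeed, choosing $s(c)\in G$ with $\pi_*(s(c)U_n)=c$, the fiber $F_c$ is $H$-equivariantly isomorphic to $H/\bigl(s(c)(H\cap U_n)s(c)^{-1}\bigr)$; since conjugation by $s(c)$ is a topological automorphism of the normal subgroup $H$, this point-stabilizer is conjugate to $H\cap U_n$, and so $F_c$, viewed as an $H$-flow, has Ramsey degree $m_H$. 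Note also that $K_n$ is countable, so there are only countably many fibers.

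Stage 1 (reduce each fiber using $H$). I first find $p_H\in\S(H)$, acting on the $G$-flow $r^{G_n}$ through the canonical map $\S(H)\to\S(G)$, so that every fiber of $\gamma':=\gamma\cdot p_H$ uses at most $m_H$ colors. Enumerate the fibers as $F_{c_1},F_{c_2},\dots$ and build $q_i\in\S(H)$ inductively: given $q_{i-1}$, apply condition (1) of the $H$-Ramsey property to the finite coloring $(\gamma\cdot q_{i-1})|_{F_{c_i}}$ of the $H$-flow $F_{c_i}$ to get $p_i\in\S(H)$ with $\bigl|\im{(\gamma\cdot q_{i-1}p_i)|_{F_{c_i}}}\bigr|\le m_H$, and set $q_i=q_{i-1}p_i$. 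The key monotonicity is that acting by a semigroup element never enlarges the set of colors, so for all $j\le i$ the fiber $F_{c_j}$ of $\gamma\cdot q_i$ still uses at most $m_H$ colors. Passing to a subnet, let $p_H$ be a limit of the $q_i$; since $\lambda_\gamma\colon\S(G)\to r^{G_n}$ and each restriction $r^{G_n}\to r^{F_{c_j}}$ are continuous, and since for each fixed $j$ the set of colorings of $F_{c_j}$ using at most $m_H$ colors is closed, every fiber of $\gamma'$ uses at most $m_H$ colors.

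Stage 2 (reduce the base using $K$). Set $P=\{S\subseteq r:|S|\le m_H\}$, a finite set, and define $\sigma\colon K_n\to P$ by $\sigma(c)=\im{\gamma'|_{F_c}}$. A direct computation from $\gamma'\cdot g_0(gU_n)=\gamma'(g_0gU_n)$ shows $\im{(\gamma'\cdot g_0)|_{F_c}}=\sigma(\pi(g_0)\cdot c)$, i.e.\ the passage $\gamma'\mapsto\sigma$ intertwines the $G$-action with the $K$-action on $P^{K_n}$ pulled back through $\pi$. Applying condition (1) of the $K$-Ramsey property for $\pi[U_n]$, choose $q\in\S(K)$ with $|\im{\sigma\cdot q}|\le m_K$, and lift it to $\tilde q\in\S(G)$ with $\pi(\tilde q)=q$ using surjectivity of $\pi\colon\S(G)\to\S(K)$. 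Writing $\tilde q=\lim_j g_j$ with $g_j\in G$, continuity of $\lambda_\sigma\colon\S(K)\to P^{K_n}$ gives $\sigma\cdot\pi(g_j)\to\sigma\cdot q$, so for each $c$ the value $\im{(\gamma'\cdot g_j)|_{F_c}}=\sigma(\pi(g_j)\cdot c)$ is eventually the fixed set $(\sigma\cdot q)(c)$; as colors only decrease in the limit, every fiber of $\gamma'\cdot\tilde q$ takes colors inside $(\sigma\cdot q)(c)$. Since $\sigma\cdot q$ takes at most $m_K$ distinct values, each of size at most $m_H$, the element $p:=p_H\tilde q$ satisfies $|\im{\gamma\cdot p}|\le m_H\cdot m_K$.

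The main obstacle is Stage 1: I must reduce all (infinitely many) fibers simultaneously with a single element of $\S(H)$, which is exactly what the inductive construction together with the continuity of the Samuel action $\lambda_\gamma$ accomplishes. This prior fiberwise reduction is indispensable, since without it the base coloring $\sigma$ would not take finitely many values and the $K$-Ramsey property could not be invoked. A secondary point to get right is that each fiber is a homogeneous space of $H$ by a conjugate of $H\cap U_n$, so that the bound $m_H$ genuinely applies fiber by fiber.
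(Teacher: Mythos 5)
Your proof is correct and takes essentially the same approach as the paper's: an inductive fiberwise reduction by elements of $\S(H)$ (with normality of $H$ ensuring previously reduced fibers stay reduced), passage to a limit, and then a reduction of the induced finite coloring of $K_n$ by color-sets using the Ramsey degree of $\pi[U_n]$ in $K$. The differences are only presentational --- the paper translates each fiber back to $H_n$ and conjugates (using $g_k p_k g_k^{-1}\in \S(H)$) and takes a cluster point of the colorings $\gamma_k$, whereas you work intrinsically on each fiber as an $H$-flow and take a subnet limit of the semigroup elements $q_i$.
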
 

\begin{proof}
	Write $H_n := H/(H\cap U_n)$ and $K_n := K/\pi[U_n]$. Then we have an inclusion map $H_n\hookrightarrow G_n$ as well as a projection map $\pi_n: G_n\to K_n$, both of which respect the various left actions. Furthermore, the equivalence relation $E_n$ induced by $\pi_n$ is exactly the orbit equivalence relation of the left $H$-action on $G_n$. From now on, we will view $H_n$ as a subset of $G_n$. 
	
	Let $\gamma\colon G_n\to r$ be a coloring. Find $\{g_k: k< \omega\}\subseteq G$ so that $\{g_k\cdot H_n: k< 
	\omega\}$ lists the $E_n$-classes in $G_n$. We now inductively define a sequence of colorings $\{\gamma_k: k< \omega\}\subseteq r^{G_n}$. Set $\gamma = \gamma_0$. If $\gamma_k$ is defined for some $k < \omega$, consider $\gamma_k\cdot g_k|_{H_n}$. We can find $p_k\in \S(H)\subseteq \S(G)$ so that $|\gamma_k\cdot g_k\cdot p_k[H_n]|\leq m_H$. Then set $\gamma_{k+1} = \gamma_k\cdot g_k\cdot p_k\cdot g_k^{-1}$. Note that $|\gamma_{k+1}[g_kH_n]|\leq m_H$. Also notice that $g_k\cdot p_k\cdot g_k^{-1}\in S(H)$, implying that for any $i\leq k$, we have $\gamma_{k+1}[g_iH_n]\subseteq \gamma_k[g_iH_n]$. Let $\delta\colon G_n\to r$ be any cluster point of $\{\gamma_k: k< \omega\}$. Then for each $k< \omega$, $\delta[g_kH_n]$ is a subset of $r$ of size at most $m_H$. This allows us to produce a finite coloring $\eta\colon K_n\to [m_H]^{\leq r}$, and we can find $q\in S(G)$ with $|\eta\cdot \pi(q)[K_n]|\leq m_K$. It follows that $\delta\cdot q[G_n]\leq m_H\cdot m_K$ as desired.
\end{proof}
\vspace{2 mm}

\subsection{The general case}

In the general case, we will need the following analogue of Theorem~\ref{Thm:RamseyDegMetr}. If $G$ is a Polish group equipped with a compatible left-invariant metric of diameter $1$ and $X$ is a compact metric space, then $\mathrm{Lip}_G(X)$ will denote the space of $1$-Lipschitz functions from $G$ to $X$. When endowed with the topology of pointwise convergence, $\mathrm{Lip}_G(X)$ becomes a compact space. We have a right action of $G$ on $\mathrm{Lip}_G(X)$; if $f\in \mathrm{Lip}_G(X)$ and $g\in G$, then $f\cdot g$ is given by $f\cdot g(h) = f(gh)$. This action is continuous, turning $\mathrm{Lip}_G(X)$ into a $G$-flow.
\vspace{2 mm}

\begin{theorem}
	\label{Thm:MGMetrGeneral}
	$\M(G)$ is metrizable iff for every $c> 0$, there is $k< \omega$ so that if $X$ is a compact metric space and $f\in \mathrm{Lip}_G(X)$, there is $\phi\in \overline{f\cdot G}$ so that $\overline{\phi[G]}\subseteq X$ can be covered by $k$-many balls of radius $c$.
\end{theorem}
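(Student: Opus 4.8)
The statement is the metric analogue of Theorem~\ref{Thm:RamseyDegMetr}, with ``finite image after translation'' replaced by ``image coverable by $k$ balls of radius $c$ after translation,'' and the countable base $\{U_n\}$ of clopen subgroups replaced by the countable family of scales $c = 1/m$. The plan is to prove the two implications separately, treating the forward direction as essentially routine and concentrating on the converse.

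For the forward direction, suppose $\M(G)$ is metrizable. Combining \cite{MNT} and \cite{BYMT} exactly as in Section~\ref{Sec:APPoints}, I would write $\M(G)\cong \widehat{G^*\backslash G}$ for an extremely amenable, co-precompact, closed subgroup $G^*\subseteq G$, and equip $\M(G)$ with the compatible metric $\partial$ inherited from $d$. Since $\M(G)$ is compact, it is totally bounded, so for each $c>0$ I let $k=k(c)$ be the number of open $\partial$-balls of radius $c$ needed to cover $\M(G)$; note $k(c)$ depends only on $G$ and $c$. Now, given a compact metric $X$ and $f\in \mathrm{Lip}_G(X)$, choose a minimal subflow $M'\subseteq \overline{f\cdot G}$ and let $q\colon \M(G)\to M'$ be the canonical $G$-map. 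Writing $x_0\in \M(G)$ for the image of $1_G$ (so that $x_0\cdot h = x_0$ for $h\in G^*$), set $\phi = q(x_0)\in \overline{f\cdot G}$. Then $\phi\cdot h = q(x_0\cdot h) = \phi$ for all $h\in G^*$, so $\phi$ is left-$G^*$-invariant and descends to a map that is $1$-Lipschitz on $G^*\backslash G$ and extends to a $1$-Lipschitz $\bar\phi\colon \widehat{G^*\backslash G}\to X$. Since $\overline{\phi[G]} = \bar\phi[\M(G)]$ and $\bar\phi$ sends a $\partial$-ball of radius $c$ into a ball of radius $c$ in $X$, the $k(c)$ balls covering $\M(G)$ push forward to $k(c)$ balls of radius $c$ covering $\overline{\phi[G]}$, as required.

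For the converse, assume the covering condition holds at every scale $c=1/m$, with witnessing bound $k(1/m)$. I would mirror the proof of Theorem~\ref{Thm:RamseyDegMetr}: the countably many scales play the role of the countable base $\{U_n\}$, and for each $m$ the uniform bound $k(1/m)$ plays the role of the finite Ramsey degree of $U_n$. Concretely, the plan is to produce for each $m$ a \emph{universal minimal metrizable flow at scale $1/m$}, realized as a minimal subflow $\mathcal{M}_m$ of the orbit closure of a single scale-$1/m$ Lipschitz function, whose image is coverable by $k(1/m)$ balls of radius $1/m$ and which is therefore compact and metrizable; then to identify $\M(G)$ with a minimal subflow of the countable product $\prod_m \mathcal{M}_m$, which is metrizable. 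Universality of this product as a $G$-flow should follow because the scales $1/m$ are cofinal for resolving an arbitrary minimal flow: any minimal $G$-flow $Y$ is captured up to arbitrary precision by $1$-Lipschitz functions into compact metric spaces, each of which factors through some $\mathcal{M}_m$. (An alternative route is to extract the co-precompact, extremely amenable subgroup $G^*$ directly from the hypothesis and invoke \cite{MNT}, \cite{BYMT} again, but the essential difficulty is the same.)

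The main obstacle is exactly this converse, and specifically the construction and universality of the scale-$1/m$ objects $\mathcal{M}_m$. The uniform bound $k(1/m)$ \emph{over all} pairs $(X,f)$ is essential and must be used globally rather than coordinate by coordinate: a purely per-function bound is insufficient, as the non-metrizable flow $\{0,1\}^{\omega_1}$ shows that bounded (indeed finite) images in each coordinate are perfectly compatible with uncountable weight. The role of the uniformity is to guarantee that the scale-$1/m$ behaviour of every Lipschitz function is controlled by a \emph{single} metrizable object, so that assembling the countably many scales yields a metrizable, rather than merely minimal, flow. Making the passage from the uniform covering hypothesis to an honest universal scale-$1/m$ flow precise is the crux of the argument.
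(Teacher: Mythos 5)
Your forward implication is correct and is, in substance, the paper's own argument: both rest on the representation $\M(G)\cong\widehat{G^*\backslash G}$ with its compatible metric $\partial$ coming from \cite{MNT} and \cite{BYMT}, and both obtain the uniform constant $k(c)$ from total boundedness of this compact metric space. The paper transfers the covering from $\M(G)$ to $\overline{\phi[G]}$ by observing that the extension of $f$ to any minimal $M\subseteq\S(G)$ is $1$-Lipschitz for $\partial$, whereas you push the $G^*$-fixed basepoint through a $G$-map onto a minimal subflow of $\overline{f\cdot G}$ and descend the resulting $\phi$ to the coset space; these are the same mechanism in different clothing, and your version is sound.

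The converse is where the genuine gap lies, and you flag it yourself: the construction and the universality of the scale-$1/m$ flows $\mathcal{M}_m$ are left entirely open, so the second half of your proposal is a program, not a proof. It is also a heavier program than necessary: for a general Polish group there is no countable list of coset spaces $G/U_n$ on which to hang colorings, and your hypothesis provides, for each individual pair $(X,f)$, only \emph{some} $\phi\in\overline{f\cdot G}$ with small image; converting these pointwise existential statements into a single minimal metrizable flow through which every $f$ factors at scale $1/m$ amounts to redoing the hard direction of Theorem~\ref{Thm:RamseyDegMetr} from \cite{ZucMetr} without the non-Archimedean structure --- exactly the step you concede is unresolved. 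The paper avoids all of this by proving the contrapositive. Assume $\M(G)$ is not metrizable. Fixing a minimal $M\subseteq\S(G)$ and imitating the proof of Lemma 2.5 of \cite{BYMT}, one finds a constant $c>0$, points $\{p_n: n<\omega\}\subseteq M$, and functions $\{f_n: n<\omega\}\subseteq\mathrm{Lip}_G([0,1])$ with $f_n(p_n)=c$ and $f_m(p_n)=0$ for $m\neq n$. For any $N<\omega$, set $f=(f_n)_{n<N}\in\mathrm{Lip}_G([0,1]^N)$ (sup metric) and pick $p\in M$. Then $\overline{(f\cdot p)\cdot G}$ is minimal, and any two elements of a minimal subflow of $\mathrm{Lip}_G([0,1]^N)$ have the same image closure, since each is a pointwise limit of translates of the other; that common image closure equals $\tilde{f}[M]$, where $\tilde{f}$ is the continuous extension of $f$ to $\S(G)$, and hence contains the $N$ points $\tilde{f}(p_n)=(f_m(p_n))_{m<N}$, which are pairwise at sup-distance $c$. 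So no finite $k$ can witness the covering condition at scale $c/2$ once $N>k$, and since $N$ is arbitrary, the right-hand side of the theorem fails. This short argument is precisely what your proposal is missing, and without it (or a completed version of your $\mathcal{M}_m$ construction) the equivalence is not established.
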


\begin{rem}
	One should think of $k < \omega$ above as the ``Ramsey degree'' of the constant $c > 0$, and we will use this terminology.
\end{rem}

\begin{proof}
	Assume first that $M(G)$ is metrizable. Fact~\ref{Fact:MNTandBYMT} gives $\M(G)$ a canonical compatible metric $\partial$ which satisfies the following property: for any $G$-map $\psi\colon \M(G)\to \S(G)$, for any compact metric space $X$, and for any $1$-Lipschitz function $f\colon G\to X$, then upon extending $f$ to $\S(G)$, the function $f\circ \psi$ is $1$-Lipschitz for $\partial$.

	For any function $\phi\in \overline{f\cdot G}$ with $\overline{\phi\cdot G}$ minimal, there is a minimal subflow $M\subseteq \S(G)$ and $p\in M$ with $\phi = f\cdot p$. It follows that $\overline{\phi[G]} = f[M]$. Given $c > 0$, pick $k< \omega$ so that $\M(G)$ can be covered by $k$-many $\partial$-balls of radius $c$. Then if $\psi\colon \M(G)\to M$ is an isomorphism, $f\circ \psi$ is $1$-Lipschitz and the result follows.
	
	For the other direction, suppose $\M(G)$ is not metrizable. Fixing minimal $M\subseteq \S(G)$ and mimicking the proof of Lemma 2.5 in \cite{BYMT}, we can find a constant $c > 0$, $\{p_n: n< \omega\}\subseteq M$, and functions $\{f_n: n< \omega\}\subseteq \mathrm{Lip}_G([0,1])$. with $f_n(p_n) = c$ and $f_m(p_n) = 0$ whenever $m\neq n$. Then for any $N< \omega$, we can form $f\colon G\to [0,1]^N$ given by $f = (f_n)_{n< N}$. If $p\in M$, then $f\cdot p\in \mathrm{Lip}_G([0,1]^N)$ has minimal orbit closure, but covering $f\cdot p[G]$ requires at least $N$-many balls of radius $c$, and $N< \omega$ is arbitrary. 
\end{proof}
\vspace{2 mm}

We now return to the setting where $1\to H\to G\to K\to 1$ is a short exact sequence of Polish groups, and we assume that $\M(H)$ and $\M(K)$ are metrizable. Therefore given $c> 0$, we let $m_H, m_K < \omega$ be the Ramsey degrees of $c$ for $H$ and $K$, respectively.
\vspace{2 mm}

\begin{prop}
	\label{Prop:RamseyBoundGen}
	Suppose $c > 0$ and $m_H, m_K< \omega$ are as above. Then letting $m_G$ denote the Ramsey degree of $2c$ in $G$, we have $m_G \leq m_H\cdot m_K$.
\end{prop}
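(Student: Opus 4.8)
The plan is to mirror the non-Archimedean argument of Proposition~\ref{Prop:RamseyDegBound}, replacing ``finitely many colours'' by ``coverable by finitely many small balls'' and the finite colouring of $K_n$ by a Lipschitz map into a hyperspace. Fix a compact metric space $X$ and $f\in \mathrm{Lip}_G(X)$; I must produce $\phi\in \overline{f\cdot G}$ whose image $\overline{\phi[G]}$ is covered by $m_H\cdot m_K$ balls of radius $2c$. Throughout I use that $H\trianglelefteq G$, so conjugation by any $g\in G$ preserves $H$ and hence extends to a homeomorphism of $\S(H)\subseteq \S(G)$, and that left-invariance of $d$ makes $h\mapsto \psi(gh)$ a $1$-Lipschitz function on $H$ for each $\psi\in \mathrm{Lip}_G(X)$.

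First, the \emph{$H$-reduction}. Since $K$ is separable I fix $\{g_j:j<\omega\}\subseteq G$ whose images are dense in $K$, so that $\{g_jH:j<\omega\}$ meets every fibre of $\pi$ densely. Starting from $f_0=f$, I inductively apply the Ramsey degree $m_H$ of $c$ for $H$ to the $1$-Lipschitz function $(f_j\cdot g_j)|_H$, obtaining $p_j\in \S(H)$ with $\overline{((f_j\cdot g_j)\cdot p_j)|_H[H]}$ covered by $m_H$ balls of radius $c$, and set $f_{j+1}=f_j\cdot(g_jp_jg_j^{-1})$. Because $g_jp_jg_j^{-1}\in \S(H)$, a short computation gives $(f_{j+1}\cdot g_j)|_H=((f_j\cdot g_j)\cdot p_j)|_H$, so the image of $f_{j+1}$ on $g_jH$ is covered by $m_H$ balls of radius $c$; moreover right multiplication by an element of $\S(H)$ moves points within a fixed coset $g_iH$, so this covering property is never destroyed at later stages. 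Passing to a cluster point $\delta\in \overline{f\cdot G}$ and using a compactness argument together with the density of the $g_j$ and the Lipschitz bound, I get that for \emph{every} $g\in G$ the set $\overline{(\delta\cdot g)[H]}$ is covered by $m_H$ balls of radius $c$.

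Next, the \emph{$K$-reduction}. Let $\mathcal F(X)$ be the hyperspace of nonempty closed subsets of $X$ with the Hausdorff metric, a compact metric space, and define $\eta\colon K\to \mathcal F(X)$ by $\eta(\pi(g))=\overline{(\delta\cdot g)[H]}$. Normality makes this independent of the chosen lift, and the same normality-plus-left-invariance computation shows $\eta$ is $1$-Lipschitz, so $\eta\in \mathrm{Lip}_K(\mathcal F(X))$ and every value of $\eta$ lies in the closed set $\{D:D\text{ covered by }m_H\text{ balls of radius }c\}$. Applying the Ramsey degree $m_K$ of $c$ for $K$ yields $q_K\in \S(K)$ with $\overline{(\eta\cdot q_K)[K]}$ covered by $m_K$ Hausdorff-balls of radius $c$; I lift $q_K$ to $q\in \S(G)$ (possible since $\pi\colon \S(G)\to \S(K)$ is onto) and set $\phi=\delta\cdot q$. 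The bridge between the two reductions is that the assignment $\psi\mapsto \eta_\psi$, where $\eta_\psi(\pi(g))=\overline{(\psi\cdot g)[H]}$, intertwines the $\S(G)$-action on $\mathrm{Lip}_G(X)$ with the $\S(K)$-action on $\mathrm{Lip}_K(\mathcal F(X))$ on group elements; combining the continuity of $p\mapsto \delta\cdot p$ with the \emph{upper} semicontinuity of $\psi\mapsto \overline{\psi[H]}$ (which holds under mere pointwise convergence), I expect to obtain the containment $\overline{(\phi\cdot g)[H]}\subseteq (\eta\cdot q_K)(\pi(g))$ for all $g$.

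Finally I combine. Each value $(\eta\cdot q_K)(\pi(g))$ lies in $\overline{(\eta\cdot q_K)[K]}$ and is therefore covered by $m_H$ balls of radius $c$, while the $K$-reduction groups these values into $m_K$ Hausdorff-$c$-neighbourhoods of representatives; the triangle inequality then upgrades the $m_H$-ball cover of a representative to an $m_H$-ball cover of radius $2c$ for every value in its neighbourhood, so $\overline{\phi[G]}=\overline{\bigcup_g(\phi\cdot g)[H]}$ is covered by $m_H\cdot m_K$ balls of radius $2c$. The main obstacle is exactly this last bookkeeping: the right $\S(G)$-action is not continuous, so I must route everything through one-sided semicontinuity (which forces the containment above rather than an equality) and be careful to spend no radius on the $H$-side, one $c$ on the Hausdorff closeness coming from $K$, and one $c$ on covering the representative, so that the total is $2c$ and not $3c$. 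Ensuring the $K$-reduction can be taken with representatives lying in the range — so that the representatives inherit the $m_H$-ball covering at radius $c$ — is the delicate point, and I would secure it from the explicit description $\M(K)\cong \widehat{K/K^*}$ underlying Theorem~\ref{Thm:MGMetrGeneral}, whose covering balls have centres that are genuine points of the flow.
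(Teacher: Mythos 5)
Your proposal is correct and follows essentially the same route as the paper's proof: an $H$-reduction performed coset by coset (using normality of $H$ to conjugate the chosen $\S(H)$-elements so that cosets already treated are not disturbed), followed by a $K$-reduction applied to the $1$-Lipschitz hyperspace map $\eta\colon K\to K(X)$ with the Hausdorff metric, and the same $c+c=2c$ accounting. The only differences are minor: the paper runs the $H$-reduction as a transfinite induction over all $[G:H]$-many cosets, taking cluster points at limit stages, rather than over your countable dense family of cosets followed by a Lipschitz-density argument, and it compresses the final bookkeeping that you spell out (the semicontinuity containment $\overline{(\phi\cdot g)[H]}\subseteq (\eta\cdot q_K)(\pi(g))$ and the centers-in-the-range refinement of Theorem~\ref{Thm:MGMetrGeneral} needed to land on $2c$ rather than $3c$) into a single ``it follows.''
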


\begin{proof}
	Let $X$ be a compact metric space, and let $\gamma\in \mathrm{Lip}_G(X)$. Find group elements $\{g_i: i< [G:H]\}$ so that $\{g_iH: i < [G:H]\}$ lists the elements of $K$. We proceed much as in the non-Archimedean case. Set $\gamma = \gamma_0$. If $\gamma_\alpha$ is defined for some ordinal $\alpha < [G:H]$, find $p_\alpha\in S(H)$ with the property that $\overline{\gamma_\alpha\cdot g_\alpha\cdot p_\alpha[H]}\subseteq X$ can be covered by at most $m_H$-many balls of radius $c$. Then set $\gamma_{\alpha+1} = \gamma_\alpha\cdot g_\alpha\cdot p_\alpha\cdot g_\alpha^{-1}$. If $\gamma_\beta$ has been defined for each $\beta < \alpha$ and $\alpha$ is a limit ordinal, let $\gamma_\alpha$ be any cluster point of $\{\gamma_\beta: \beta < \alpha\}$. 
	
	Letting $\kappa = [G:H]$ (so $\kappa = \omega$ or $\mathfrak{c}$), notice that for any $\alpha < \kappa$, we have $\overline{\gamma_\kappa[g_\alpha H]}\subseteq \overline{\gamma_{\alpha+1}[g_\alpha H]}$. Now form $K(X)$, the space of compact subsets of $X$ equipped with the Hausdorff metric. Notice that since $\gamma_\kappa$ is $1$-Lipschitz, the function $\eta\colon K\to K(X)$ given by $\eta(g_\alpha H) = \overline{\gamma_\kappa[g_\alpha H]}$ is $1$-Lipschitz. Find $q\in S(G)$ so that $\overline{\eta\cdot q[K]}\subseteq K(X)$ can be covered by at most $m_K$-many balls of radius $c$. It follows that $\overline{\gamma_\kappa\cdot q[G]}\subseteq X$ can be covered by at most $m_H\cdot m_K$ balls of radius $2c$.
\end{proof}
\vspace{2 mm}

\section{Questions}
\label{Sec:Questions}

\subsection*{Analyzing the fibers}
Our first question is a strengthening of Theorem \ref{Thm:MGMetrGeneral}, where we aim to describe the minimal $H$-flows of the form $\pi^{-1}(\{y\})$ for $y\in \M(K)$.
\vspace{2 mm}

\begin{que}
	\label{Que:Fibers}
	Let $1\to H\to G\xrightarrow{\pi} K\to 1$ be a short exact sequence of Polish groups with $\M(H)$ and $\M(K)$ metrizable. Is it the case that $\pi^{-1}(\{y\})\cong \M(H)$ for every $y\in \M(K)$?
\end{que} 
\vspace{2 mm}

In the interest of understanding this question better, we focus on the case that $G$ is non-Archimedean, and we continue to use notation from Section~\ref{SubSec:NonArch}. For each $n< \omega$, let $m_H(n)$, $m_G(n)$, and $m_K(n)$ denote the Ramsey degrees of the clopen subgroups $H\cap U_n$, $U_n$, and $\pi[U_n]$ in $H$, $G$, and $K$, respectively. In Proposition~\ref{Prop:RamseyDegBound}, we showed that $m_G(n)\leq m_H(n)\cdot m_K(n)$ for every $n<\omega$.
\vspace{2 mm}

\begin{prop}
	\label{Prop:RamseyProductFibers}
		Suppose for each $n< \omega$, we have $m_G(n) = m_H(n)\cdot m_K(n)$. Then for any $y\in \M(K)$, we have $\pi^{-1}(\{y\})\cong \M(H)$.
\end{prop}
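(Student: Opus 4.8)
The plan is to reduce to a finite-level (coloring-by-coloring) analysis and then pass to an inverse limit. Fix $y\in \M(K)$. By Theorem~\ref{Thm:MGMetrizable} the fiber $\pi^{-1}(\{y\})$ is a minimal $H$-flow, so there is a unique $H$-map $\theta_y\colon \M(H)\to \pi^{-1}(\{y\})$, and since it is a surjection of minimal flows it suffices to show $\theta_y$ is injective. First I would fix a coherent family of $G$-minimal colorings $\delta_n\colon G_n\to m_G(n)$ and set $\M_n(G):=\overline{\delta_n\cdot G}$, a minimal metrizable $G$-factor of $\M(G)$, arranged (as one may by the analysis behind Theorem~\ref{Thm:RamseyDegMetr}) so that $\M(G)=\varprojlim_n \M_n(G)$. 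Projecting the $\delta_n$ along $\pi_n$ yields $K$-minimal colorings and minimal factors $\M_n(K)$ with $\M(K)=\varprojlim_n \M_n(K)$, so that $\pi$ is the inverse limit of the factor maps $\pi_n\colon \M_n(G)\to \M_n(K)$ and $y$ is determined by its images $y_n\in \M_n(K)$.

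The combinatorial heart is a local computation at each level $n$. Let $F_n\subseteq \M_n(G)$ denote the image of the minimal $H$-flow $\pi^{-1}(\{y\})$ under the $H$-map $\M(G)\to \M_n(G)$; then $F_n$ is again a minimal $H$-flow, and it lies inside the fiber $\pi_n^{-1}(\{y_n\})$. Restricting the colorings in $F_n$ to the identity class $H_n\subseteq G_n$ defines an $H$-map from $F_n$ onto a minimal $H$-flow $Z_n$, exactly the ``within-class'' factor that appears implicitly in Proposition~\ref{Prop:RamseyDegBound}. I claim the hypothesis $m_G(n)=m_H(n)\cdot m_K(n)$ forces $Z_n$ to realize the full Ramsey degree $m_H(n)$ and forces the map $F_n\to Z_n$ to be an isomorphism, with $Z_n$ the universal level-$n$ $H$-factor $\M_n(H)$. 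The point is that the bound of Proposition~\ref{Prop:RamseyDegBound} arises from recording, class by class, the set of colors used: a minimal $G$-coloring attaining the full $m_G(n)$ colors can exhibit at most $m_K(n)$ distinct class-patterns, each using at most $m_H(n)$ colors, so equality is precisely the statement that the $m_G(n)$ colors split into $m_K(n)$ pairwise disjoint blocks of size $m_H(n)$, one full block per class-type, with the coloring inside each class a minimal $H$-coloring attaining $m_H(n)$. Reading this off at the identity class identifies $F_n$ with $\M_n(H)$.

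With the level isomorphisms in hand I would assemble them through the inverse limit: $\pi^{-1}(\{y\}) = \varprojlim_n F_n \cong \varprojlim_n \M_n(H) = \M(H)$, where the last equality records that the coherent family of within-class minimal $H$-colorings presents $\M(H)$ as its inverse limit. Equivalently, this shows $\theta_y$ separates any two distinct points of $\M(H)$, since two such points already differ in some $\M_n(H)\cong F_n$, so $\theta_y$ is the desired isomorphism.

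I expect the main obstacle to be the tightness argument of the second paragraph: extracting from the single numerical equality $m_G(n)=m_H(n)\cdot m_K(n)$ the full structural conclusion (disjoint color blocks, one full block per class-type, and minimality with the full color count of the induced within-class $H$-coloring). In particular one must rule out equality being achieved by class-patterns that share colors while some compensating effect restores the count, and this will require combining $G$-minimality of $\delta_n$ with the transitivity of the $G$-action on $G_n$ and of the induced $K$-action on $K_n$. A secondary, more bookkeeping difficulty is the coherence needed to upgrade the level-wise isomorphisms $F_n\cong \M_n(H)$ to an isomorphism of inverse limits, i.e.\ to check that the connecting maps $F_{n+1}\to F_n$ match those of the system $(\M_n(H))_n$.
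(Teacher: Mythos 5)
Your overall architecture --- a level-wise analysis of minimal colorings, restriction to the identity class $H_n$, and use of the equality $m_G(n)=m_H(n)\cdot m_K(n)$ to force the restricted colorings to attain the full Ramsey degree $m_H(n)$ --- matches the paper's strategy, and your counting sketch (at most $m_K(n)$ class-patterns, each of at most $m_H(n)$ colors, so equality forces disjoint full blocks) is essentially the argument behind the paper's terse assertion that $\gamma_n|_{H_n}$ is a surjective, $H$-minimal $m_H(n)$-coloring. But your proof hinges on a claim that this counting cannot deliver and that you leave unproven: that the restriction map $F_n\to Z_n$ is an \emph{isomorphism}. The block structure is information about \emph{images} (which colors appear on which classes); injectivity of $F_n\to Z_n$ is a statement about the \emph{kernel} of the restriction, i.e.\ about when two limits of $H$-translates that agree on $H_n$ must agree everywhere. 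Writing $F_n=z\cdot \S(H)$ for a generating point $z$, the restriction of $z\cdot p$ to another class $gH_n$ is (after translating back to $H_n$) the point $(z\cdot g)|_{H_n}\cdot(g^{-1}pg)$, so injectivity amounts to the implication $z|_{H_n}\cdot p=z|_{H_n}\cdot q\ \Rightarrow\ (z\cdot g)|_{H_n}\cdot(g^{-1}pg)=(z\cdot g)|_{H_n}\cdot(g^{-1}qg)$ for all $g\in G$ and $p,q\in\S(H)$ --- a left-invariance property of kernels of evaluation maps that does not follow from any statement about color sets. Your ``main obstacle'' paragraph flags only the block-structure tightness, not this, and the secondary layer of bookkeeping you mention (that $\pi=\varprojlim\pi_n$, that $\pi^{-1}(\{y\})=\varprojlim F_n$, and that the level isomorphisms commute with connecting maps) is likewise nontrivial and unverified.

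The paper's proof is organized precisely so that neither issue arises. Lemma~\ref{Lem:MGRepresentation} shows that if each $\gamma_n$ is a surjective minimal coloring attaining $m_G(n)$ and the diagonal $\gamma=(\gamma_n)_{n<\omega}$ has minimal orbit closure, then $\overline{\gamma\cdot G}\cong\M(G)$; the injectivity there is extracted from $\S(G)=\varprojlim \beta G_n$, the fact $|\pi_n[M]|=m_G(n)$ from \cite{ZucMetr}, and the equivalence of minimal colorings attaining the Ramsey degree --- never from level-wise injectivity of a restriction map. Then, for the proposition, the fiber is $\overline{\gamma\cdot H}$ (minimal by Theorem~\ref{Thm:MGMetrizable}), and it factors \emph{diagonally over all $n$ at once} onto $\overline{(\gamma_n|_{H_n})_{n<\omega}\cdot H}$; the hypothesis $m_G(n)=m_H(n)\cdot m_K(n)$ enters only to guarantee that each $\gamma_n|_{H_n}$ is a surjective $H$-minimal $m_H(n)$-coloring, so that Lemma~\ref{Lem:MGRepresentation} applied to $H$ identifies this diagonal image with $\M(H)$. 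Finally, a minimal $H$-flow factoring onto $\M(H)$ is isomorphic to $\M(H)$: the composite $\M(H)\to\overline{\gamma\cdot H}\to\M(H)$ is an endomorphism of $\M(H)$, hence an isomorphism by the Section~\ref{Sec:Background} argument. So the separation of points is purchased globally by the universality of $\M(H)$, not level by level. As written, your level-wise isomorphism claim is a genuine gap, and I would restructure the argument along the paper's lines rather than try to prove it.
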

\vspace{2 mm}

We will need the following lemma.
\vspace{2 mm}

\begin{lemma}
	\label{Lem:MGRepresentation}
	Suppose for each $n< \omega$ that $\gamma_n\colon G_n\to m_G(n)$ is a surjective coloring with $\overline{\gamma_n\cdot G}$ minimal. Form $\gamma = (\gamma_n)_{n< \omega}\in \prod m_G(n)^{G_n}$, and assume also that $\overline{\gamma\cdot G}$ is minimal. Then $\M(G)\cong \overline{\gamma\cdot G}$. 
\end{lemma}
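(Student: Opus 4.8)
The plan is to identify $\M(G)$ with a minimal right ideal $M\subseteq \S(G)$ and to realize the desired isomorphism as the orbit map of $\gamma$. Concretely, since $\overline{\gamma\cdot G}$ is minimal by hypothesis and $\M(G)$ is universal, the semigroup action supplies a surjective $G$-map $\Psi\colon M\to \overline{\gamma\cdot G}$, $\Psi(p)=\gamma\cdot p$: its image is a subflow containing $\gamma\cdot u$ for a minimal idempotent $u\in M$, hence all of $\overline{\gamma\cdot G}$ by minimality. As a continuous surjection between compact Hausdorff spaces, $\Psi$ is an isomorphism as soon as it is injective, so the whole lemma reduces to showing that the family $(\gamma_n)_{n<\omega}$ separates the points of $M$.

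First I would recall why separation is plausible at all: since $G$ is non-Archimedean with clopen base $\{U_n\}$, the finite colorings of the various $G_n$ generate a dense subalgebra of $C(\S(G))$, so they separate the points of $\S(G)$, in particular those of $M$. For a fixed $n$, write $p\sim_n q$ when $\delta\cdot p=\delta\cdot q$ for every finite coloring $\delta$ of $G_n$, and let $\M_n:=M/{\sim_n}$ be the resulting quotient; it is a minimal $G$-flow, namely the universal minimal ``level $n$'' flow, and $p=q$ in $M$ holds exactly when $p\sim_n q$ for all $n$. For each $n$ there is a canonical $G$-map $\M_n\twoheadrightarrow \overline{\gamma_n\cdot G}$ given by projection to the $\gamma_n$-coordinate. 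Injectivity of $\Psi$ follows once we know that each of these projections is an isomorphism: then $\gamma_n\cdot p$ recovers the whole $\sim_n$-class of $p$, so knowing $\gamma\cdot p=(\gamma_n\cdot p)_{n}$ recovers $p$.

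The main obstacle, and the only place the hypotheses on the individual $\gamma_n$ are really used, is to prove that the canonical surjection $\M_n\twoheadrightarrow\overline{\gamma_n\cdot G}$ is an isomorphism, i.e.\ that a minimal coloring attaining the \emph{exact} Ramsey degree $m_G(n)$ already represents the universal minimal level-$n$ flow. This is where both clauses of Definition~\ref{Def:RamseyDegree} enter: clause (1) bounds the level-$n$ shadow of the generic point of $\M_n$ by $m_G(n)$ colors, while clause (2) together with minimality forces it to use exactly $m_G(n)$, so $\M_n$ is itself carried by a surjective minimal coloring onto $m_G(n)$; the content is then that such a coloring is unique up to relabeling, so that the surjection $\M_n\to\overline{\gamma_n\cdot G}$ cannot identify distinct fibers. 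I expect this uniqueness to be the crux; it is essentially the representation statement underlying Theorem~\ref{Thm:RamseyDegMetr}, so I would invoke \cite{ZucMetr} for it rather than reprove it from scratch. Finally, the joint minimality of $\overline{\gamma\cdot G}$ is what guarantees that $\Psi$ is a map \emph{between minimal flows}, so that universality of $\M(G)$ produces $\Psi$ in the first place and the level-wise isomorphisms can be assembled into the conclusion $\M(G)\cong\overline{\gamma\cdot G}$.
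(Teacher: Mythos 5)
Your overall architecture coincides with the paper's: identify $\M(G)$ with a minimal right ideal $M\subseteq \S(G)$, reduce the lemma to injectivity of the orbit map $p\mapsto\gamma\cdot p$ on $M$ (using joint minimality of $\overline{\gamma\cdot G}$ only for surjectivity), and prove injectivity level-by-level. Your quotient $\M_n=M/{\sim_n}$ is exactly the paper's level-$n$ data in disguise: $p\sim_n q$ iff $\pi_n(pg)=\pi_n(qg)$ for all $g\in G$, where $\pi_n\colon \S(G)\to\beta G_n$ is the coordinate projection, and the reduction ``each canonical surjection $\M_n\to\overline{\gamma_n\cdot G}$ is injective $\Rightarrow$ $\Psi$ is injective'' is correct.

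The gap is at the step you flag as the crux, and it is a real one. The principle you invoke --- that a surjective $G$-minimal coloring $G_n\to m_G(n)$ is ``unique up to relabeling'' --- is false as literally stated: if $\delta$ is such a coloring, so is every translate $\delta\cdot g$, and translates generally induce different partitions of $G_n$ (for $G=S_\infty$ and $n=2$, the minimal $2$-colorings of pairs coming from two different dense linear orderings of $\N$ are both surjective and minimal but disagree). Nor can the needed statement simply be quoted from \cite{ZucMetr}: what that reference supplies is the counting fact $|\pi_n[M]|=m_G(n)$; the alignment of the specific partition induced by $\gamma_n$ with the canonical level-$n$ partition is proved in the present paper, not cited, and it needs an idea absent from your sketch. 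The paper's synchronization step is this: since $\overline{\gamma\cdot G}$ is minimal, there is an idempotent $u\in M$ with $\gamma\cdot u=\gamma$; then $\gamma_n=\gamma_n\cdot u$ unwinds to $\gamma_n=\tilde{\gamma}_n\circ\lambda_u^n$, where $\lambda_u^n(gU_n)=\pi_n(ug)$ and $\tilde{\gamma}_n$ is the continuous extension of $\gamma_n$ to $\beta G_n$; now $\tilde{\gamma}_n$ maps $\pi_n[M]$ (of size $m_G(n)$, by \cite{ZucMetr}) onto $m_G(n)$ (by surjectivity of $\gamma_n$), hence bijectively, so the two partitions agree \emph{on the nose}, and injectivity follows because $u$ is a left identity on $M$. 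What is true in general is only a relative statement: two surjective minimal $m_G(n)$-colorings whose \emph{joint} orbit closure is minimal differ by a bijective relabeling (their paired image has size at most $m_G(n)$ by clause (1) of Definition~\ref{Def:RamseyDegree}, and both coordinate projections are onto sets of size $m_G(n)$, forcing the image to be the graph of a bijection). The idempotent $u$ --- or, alternatively, the observation that for any $p_0\in M$ the pair $\bigl(gU_n\mapsto\pi_n(p_0g),\,\gamma_n\cdot p_0\bigr)$ is the image of $M$ under a $G$-map and hence jointly minimal --- is precisely what makes this relative statement applicable to your canonical surjection. Without some such step, ``$\gamma_n\cdot p$ recovers the $\sim_n$-class of $p$'' is unjustified.
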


\begin{rem}
	Notice that we may then think of $\M(G)$ as the space of all such $\gamma = (\gamma_n)_{n< \omega}$.
\end{rem}

\begin{proof}
	We use some of the ideas from~\cite{ZucMetr}. Recall that we have $\S(G) = \varprojlim \beta G_n$. We let $\pi_n$ denote the projection onto coordinate $n$. Explicitly, given $p\in \S(G)$, we have $\pi_n(p) = \displaystyle\lim_{g\to p} gU_n\in \beta G_n$. Notice that if $gU_n = hU_n$, then $\pi_n(pg) = \pi_n(ph)$. If $x\in \beta G_n$, we set $p\cdot x = \displaystyle\lim_{gU_n\to x} \pi_n(pg)\in \beta G_n$. If $p, q\in \S(G)$, we note that $\pi_n(pq) = p\cdot \pi_n(q)$.

	Let $M\subseteq \S(G)$ be a minimal subflow. It is shown in \cite{ZucMetr} that $|\pi_n[M]| = m_G(n)$. The map $\lambda_\gamma\colon M\to \overline{\gamma\cdot G}$ given by $\lambda_\gamma(p) = \gamma\cdot p$ is a $G$-map, and it suffices to show that it is injective. Notice first that $\gamma\cdot p = (\gamma_n\cdot p)_{n< \omega}$ for any $p\in \S(G)$.  Suppose $u\in M$ is an idempotent such that $\gamma = \gamma\cdot u$. For each $n<\omega$, consider the coloring $\lambda_u^n\colon G_n\to \pi_n[M]$ given by $\lambda_u^n(gU_n) = \pi_n(ug)$. Then $\lambda_u^n$ and $\gamma_n\cdot u = \gamma_n$ are both surjective, minimal colorings each taking $m_G(n)$ values. Because $m_G(n)$ is the Ramsey degree of $U_n$ in $G$, we must have that $\lambda_u^n$ and $\gamma_n$ are equivalent, i.e.\ for $g_0U_n, g_1U_n\in G_n$, we have $\pi_n(ug_0) = \pi_n(ug_1)$ iff $\gamma_n(g_0U_n) = \gamma_n(g_1U_n)$. 
	
	So suppose $\gamma\cdot p = \gamma\cdot q$ for some $p, q\in M$. In particular, upon extending the coloring to $\beta G_n$, we have $\gamma_n(\pi_n(p)) = \gamma_n(\pi_n(q))$. So also we have $u\cdot \pi_n(p) = u\cdot \pi_n(q)$. But since $u$ is a left identity for $M$, we have $\pi_n(p) = \pi_n(q)$. Since $n< \omega$ is arbitrary, we have $p = q$ as desired.
\end{proof}
\vspace{1 mm}

\begin{proof}[Proof of Proposition~\ref{Prop:RamseyProductFibers}]
	Suppose $m_G(n) = m_H(n)\cdot m_K(n)$. Let $\gamma = (\gamma_n)_{n<\omega}$ be as in Lemma~\ref{Lem:MGRepresentation}, so that $\M(G)\cong \overline{\gamma\cdot G}$. It is enough to show that $\overline{\gamma\cdot H}\cong \M(H)$. From the proof of Theorem~\ref{Thm:MGMetrizable}, we know that $\overline{\gamma\cdot H}$ is minimal, so also each $\overline{\gamma_n\cdot H}$ is minimal. These in turn factor onto $\overline{\gamma_n|_{H_n}\cdot H}$, so also $\overline{(\gamma_n|_{H_n})_{n<\omega}\cdot H}$ is minimal. We then note that since $\gamma_n$ was a surjective, minimal $m_G(n)$-coloring, we must have that $\gamma_n|_{H_n}$ is a surjective, $H$-minimal $m_H(n)$-coloring. Therefore we are done by Lemma~\ref{Lem:MGRepresentation}.
\end{proof}

\subsection*{Unique ergodicity and open subgroups}

Theorem \ref{Thm:GUniqueErg} tells us that metrizability of the universal minimal flow and unique ergodicity are stable under group extension. Therefore we can add entries to the following table, describing which dynamical properties are preserved under which group-theoretic operations.
\vspace{4 mm}

\begin{tabular}{l| c c c c}
{} & Amenability & Ext. amen. & Metriz. of the UMF & + unique ergo. \\
\hline
Group extensition & \checkmark & \checkmark & \checkmark & \checkmark \\
Countable products & \checkmark & \checkmark & \checkmark & \checkmark \\
Direct limits & \checkmark & \checkmark & $\times$ & $\times$ \\
Open subgroup & \checkmark & \checkmark & \checkmark & ?
\end{tabular}
\vspace{3 mm}

The arguments for open subgroups can be found in the proof of Lemma $13$ in \cite{BPT}. We already proved that metrizability of the universal minimal flow and unique ergodicity are stable under group extension and countable products, thus we only need to prove that amenability and extreme amenability are stable under group extension and direct limits. We will also produce counter examples for the failure of stability of metrizability of the universal minimal flow and unique ergodicity under direct limits.

We will use the following characterization of amenability (see \cite{Pest18}):
\begin{defin}A Polish group $G$ is \emph{amenable} if every continuous affine action of $G$ on a compact convex subspace of a locally convex topological vector space admits a fixed point.
\end{defin}
As this definition is very close to extreme amenability, we will only produce the proof of stability of amenability, the proof for extreme amenability following the same steps.
\vspace{2 mm}

\emph{Stability under group extension:} We consider the short exact sequence
\begin{equation*}
1\to H\to G\xrightarrow{\pi} K\to 1,
\end{equation*}
where $H$ and $K$ are amenable, and an affine continuous action $G\curvearrowright X$ on a convex compact space. $H$ acts on $X$ and therefore admits fixed points. Since the action is affine, the set of fixed points is convex, it is closed by continuity of the action. Morevover, $K$ acts on this set of fixed point, and by amenability this action also admits a fixed point. This point will then be $G$ invariant. 
\vspace{2 mm}

\emph{Stability under direct limits:} A group $G$ is a \emph{direct limit} of the sequence $(G_n)_{n< \omega}$ if we have
\begin{equation*}
G_0 \leq \cdots \leq G_n \leq \cdots \leq G
\end{equation*}
and $\overline{\bigcup G_n}$ is dense in $G$. We are interrested in the case where $G_n$ is amenable for all $n\in \N$.

Again, consider an affine continuous action $G\curvearrowright X$ on a convex compact space. $G_n\curvearrowright X$ admits a fixed point $x_n$. Since $X$ is compact, $(x_n)_{n\in \N}$ admits a cluster point $x\in X$. Then $x$ is $G_n$-invariant for all $n\in  \N$, hence it is $G$-invariant.
\vspace{2 mm}

\emph{Counterexamples:} In \cite{KPT} appendix $2$, it is proven that non compact locally compact group have non metrizable universal minimal flows. Moreover, Weiss proved in \cite{Weiss} that discrete group are never uniquely ergodic. Hence, any locally finite discrete group produce the counterexample we need, for instance the group of permutations of $\N$ with finite support.
\vspace{2 mm}

The next question concerns the question mark appearing in the array.
\vspace{2 mm}

\begin{que}
Let $G$ be a uniquely ergodic Polish group with metrizable universal minimal flow and $U$ an open subgroup. Is $U$ uniquely ergodic?
\end{que}
\vspace{2 mm}

Another question concerns the connection between $\M(G)$ metrizable and unique ergodicity. For all known examples of  uniquely ergodic Polish groups $G$, we have that $\M(G)$ is metrizable.
\vspace{2 mm} 

\begin{que}
Let $G$ be a uniquely ergodic Polish group. Is $\M(G)$ metrizable?
\end{que}
\vspace{2 mm}





\vspace{5 mm}

\noindent
Colin Jahel

\noindent
Universit\'e Paris Diderot, IMJ-PRG

\noindent
colin.jahel@imj-prg.fr
\vspace{5 mm}

\noindent
Andy Zucker

\noindent
Universit\'e Paris Diderot, IMJ-PRG

\noindent
andrew.zucker@imj-prg.fr

\end{document}